\numberwithin{equation}{section}
\theoremstyle{plain}
\newtheorem{thm}{Theorem}[section]
\newtheorem{prop}[thm]{Proposition}
\newtheorem{lemma}[thm]{Lemma}
\newtheorem{conjecture}[thm]{Conjecture}
\theoremstyle{definition}
\newtheorem{deff}[thm]{Definition}
\newtheorem{example}[thm]{Example}
\theoremstyle{remark}
\newcommand{\dom}{\mathop{\boldsymbol d}}
\newcommand{\ran}{\mathop{\boldsymbol r}}
\newcommand{\inv}{^{-1}}
\def\N{\mathbb N}
\newcommand{\Ima}{\operatorname{Im}}
\def\g{\gamma}
\def\d{\delta}
\def\G{\Gamma}
\def\mG{\mathcal{G}}
\def\mGo{\mG^{(0)}}
\def\mH{\mathcal{H}}
\def\mHo{\mH^{(0)}}
\newcommand{\FG}{[[\mG]]}
\def\xra{\xrightarrow[]{}}
\def\a{\alpha}
\def\b{\beta}
\def\O{\mathcal{O}}
\def\SS{\mathcal{S}}
\def\sub{\subseteq}
\def\M{\mathbb{M}}
\def \Z{\mathbb Z}
\def\-{\text{-}}
\def\LL{\mathcal{L}}
\newcommand{\supp}{\operatorname{supp}}
\newcommand{\Iso}{\operatorname{Iso}}
\newcommand{\gr}{\operatorname{gr}}
\newcommand{\id}{\operatorname{id}}
\begin{document}

\title[\'Etale groupoids and Steinberg algebras]{\'Etale groupoids and Steinberg algebras\\ a concise introduction}

\author {Lisa Orloff Clark}
\address {Lisa Orloff Clark: School of Mathematics and Statistics, Victoria University of Wellington, NEW ZEALAND}
\email {lisa.clark@vuw.ac.nz}

\author{Roozbeh Hazrat}
\address{Roozbeh Hazrat: 
Centre for Research in Mathematics\\
Western Sydney University\\
AUSTRALIA} \email{r.hazrat@westernsydney.edu.au}

\subjclass[2010]{22A22, 18B40,16D25}

\keywords{Groupoid, Inverse semigroup, Steinberg algebra, Leavitt path algebra}

\date{\today}

\begin{abstract} 

We give a concise introduction to (discrete) algebras arising from \'etale groupoids, (aka Steinberg algebras) and describe their close relationship with groupoid $C^*$-algebras. Their connection to partial group rings via inverse semigroups also explored. 

\end{abstract}

\maketitle

 
\begin{flushright}
{\it Keep fibbing and you'll end up with the truth!} \\ Dostoyevsky, Crime and Punishment. 
\end{flushright}

\section{Introduction}

\tikzstyle{decision} = [diamond, draw, fill=blue!50]
\tikzstyle{line} = [draw,  -stealth, thick]
\tikzstyle{lined} = [draw, bend right=45, thick, dashed]
\tikzstyle{elli}=[draw, ellipse, top color=white, bottom color=red!90 ,minimum height=8mm, text width=6.3em, text centered]

\tikzstyle{elli2}=[draw, ellipse, top color=white, bottom color=green!90 ,minimum height=8mm, text width=6.3em, text centered]

\tikzstyle{elli3}=[draw, ellipse, top color=white, bottom color=blue!90 ,minimum height=8mm, text width=6.3em, text centered]

\tikzstyle{block} = [draw, rounded corners, rectangle, top color=white, bottom color=white!80, text width=8em, text centered, minimum height=15mm, node distance=7em]

\tikzstyle{block2} = [draw, rounded corners, rectangle, top color=white!80, bottom color=white, text width=8em, text centered,  minimum height=15mm, node distance=7em]

\begin{wrapfigure}{r}{0.45\textwidth} 

\begin{tikzpicture}[scale=0.7, transform shape]

\GraphInit[vstyle = Shade]

\node [block] (groupoid) {\bf  \'etale groupoid};

\node [elli3, above of=groupoid, xshift=0em, yshift=9em] (gdynamic) {\bf  groupoid dynamics};

\node[block2, below of=groupoid] (partial) {\bf \footnotesize partial inverse semigroup action};

\node [elli3, below of=partial, xshift=0em, yshift=-9em] (gpartial) {\bf  semigroup dynamics};

\tikzset{
  EdgeStyle/.append style = {->} 
  }

\Edge (groupoid)(gdynamic)

\Edge (partial)(gpartial)

\node[elli, above of=groupoid, xshift=-12em, yshift=3em] (steinberg) {\footnotesize  \bf Steinberg algebra};

\node[elli2, below of=partial, xshift=-12em, yshift=-3em] (ring) {\footnotesize  \bf partial group ring};

\node[elli, above of=groupoid, xshift=12em, yshift=3em] (cstar) {\footnotesize \bf groupoid $C^*$-algebra};

\node[elli2, below of=partial, xshift=12em, yshift=-3em] (crossp) {\footnotesize \bf partial crossed product};

\tikzset{
  EdgeStyle/.append style = {->, bend left} 
  }

\Edge (groupoid)(steinberg)
\tikzset{
  EdgeStyle/.append style = {->, bend right} 
  }

\Edge (groupoid)(cstar)
\Edge (partial)(ring)
\tikzset{
  EdgeStyle/.append style = {->, bend left} 
  }
\Edge (partial)(crossp)
\tikzset{
  EdgeStyle/.append style = {->, bend left} 
  }
\Edge (partial)(crossp)
\Edge (groupoid)(partial)
\Edge (partial)(groupoid)

\end{tikzpicture}

\end{wrapfigure}

In the last couple of years, \'etale groupoids have become a focal point in of several areas of mathematics. The convolution algebras arising from \'etale groupoids, considered both in analytical setting \cite{renault} and algebraic setting \cite{cfst,st}, include many deep and important examples such as Cuntz algebras~\cite{cuntz1} and Leavitt algebras~\cite{vitt57} and allow systematic treatment of them.  Partial actions and partial symmetries can also be realised as \'etale groupoids (via inverse semigroups), allowing us to relate convolution algebras to partial crossed products (\cite{de,exel20081}). 

Realising that the invariants long studied in topological dynamics can be modelled on \'etale groupoids (such as homology, full groups and orbit equivalence~\cite{matui2012}) and that these are directly related to invariants long studied in analysis and algebra (such as $K$-theory) allows an interaction between areas;  we can use techniques developed in algebra in analysis and vice versa.  The \'etale groupoid is the Rosetta stone. 

The study of representations of \'etale groupoids on Hilbert space and the associated $C^*$-algebras was pioneered by Renault in~\cite{renault}. In this seminal work he showed that Cuntz algebras can be realised using groupoid machinery. In \cite{kprr} the authors associated an \'etale groupoid to a directed graph and the subject of graph $C^*$-algebras was born.  The universal construction of these graph  $C^*$-algebras via generators and relations was then established in \cite{bates}. The analytic activities then exploded in several directions; to describe the properties of the graph $C^*$-algebras directly from the geometry of the graph, to classify these algebras and to extend the definition to other type of graphs (such a higher rank graphs~\cite{kphigher}).

There has long been a trend of ``algebraisation'' of concepts from operator theory into a purely algebraic context. This seems to have started with von Neumann and Kaplansky who devised ways of seeing operator algebraic properties in underlying discrete structures \cite{kap}. As Berberian puts it in \cite{berber}, ``if all the functional analysis is stripped away...what remains should stand firmly as a substantial piece of algebra, completely accessible through algebraic avenues''.

This translation did happen in the setting of graph algebras in the reverse order (and with about 30 year lag): in \cite{abrams2005, ara2006} the algebraic analogue of graph $C^*$-algebras were defined using generators and relations (called Leavitt path algebras) and then the algebraic analogue of groupoid $C^*$-algebras was developed in \cite{cfst,st} (now called Steinberg algebras).

Another strand is the work of Exel on partial actions of groups on spaces and their corresponding $C^*$-algebras. Again, the algebraic version of this theory is developed~\cite{de} and the close connections with groupoids is established~\cite{exel20081,batista}. 

This survey exclusively concentrates on \'etale groupoids with totally disconnected unit spaces (aka ample groupoids)  and their convolution algebras (aka, Steinberg algebras). One reason is that over such groupoids our $R$-algebras are just $R$-valued continuous 
 functions with compact support over the groupoid and there is a known universal description for such algebras. We will briefly describe the situation when the groupoid is not Hausdorff as well. 
 We describe their connections with groupoid $C^*$-algebras and Exel's partial constructions.   The concepts of inverse semigroup and groupoid are tightly related (as the diagram in the first page shows) and are models for partial symmetries. In Sections \ref{inverse4} and \ref{groupoid4} we study these concepts with a view towards the algebras that arise from them which we describe in Section~\ref{stein4}.

The use of groupoids extends to  many areas of mathematics, from ergodic theory and functional analysis (such as work of Connes in noncommutative geometry~\cite{connes}) to homotopy theory (\cite{brown1}), algebraic geometry, differential geometry and group theory. The reader is encouraged to consult \cite{brownbang,higgins,weinstein} for more details on the history and development of groupoids.

\section{Inverse semigroups}\label{inverse4}

There is a tight relation between the notion of groupoids and its ``dual'' inverse semigroups. We start the survey with a description of inverse semigroups. 

\subsection{Inverse semigroups} \label{unfairjfgng}

Recall that a semigroup is a nonempty set with an associative binary operation. For a semigroup $S$,  the element $x\in S$ is called \emph{regular}  if $xyx=x$. In this case we can arrange that $xyx=x$ and $yxy=y$ and we say $x$ has an \emph{inner inverse}. We say a semigroup is \emph{regular} if each element has an inner inverse. 

An inverse semigroup is a semigroup that each element has a unique inner inverse. Namely, an \emph{inverse semigroup} is a semigroup $S$ such that, for each $s\in S$, there exists a unique element $s^*\in S$ such that  
\[ss^*s=s   \text{ and } s^*ss^*=s^*.\]  The uniqueness guarantees that the map $s \rightarrow  s^*$ induces an involution on $S$. One can check that 
\[E(S):=\{ ss^* \mid  s\in S\},\] is the set of idempotents of $S$ and is an abelian subsemigroup. One way to prove a semigroup is an inverse semigroup is to show that it is a regular semigroup  and the set of idempotents are abelian. 

In fact $E(S)$ is a meet semilattice with respect to the partial ordering $e\leq f$ if $ef=e$; the meet is the product.  The partial order extends to the entire inverse semigroup by putting $s\leq t$ if $s=te$ for some idempotent $e\in E(S)$ (or, equivalently, $s=ft$ for some  $f\in E(S)$).  This partial order is preserved under multiplication and inversion.

Most of the inverse semigroups we encounter have a zero element. An inverse semigroup $S$ has a zero element $0$ if  $0x=0=x0$ for all $x\in S$.  The zero element is unique when it exists and often corresponds to the empty set in our concrete examples. Any semigroup homomorphism $p\colon S\rightarrow T$ of inverse semigroups automatically preserves the involution, i.e., $p(s^*)=p(s)^*$.

Parallel to the group of symmetries and the theorem of Cayley, we next define the inverse semigroup of partial symmetries and recall the theorem of Wagner-Preston. Let $X$ be a set and $A,B \subseteq X$.  A bijective map $f: A\rightarrow B$ is called a \emph{partial symmetry} of $X$.  Denote by $\mathcal I(X)$ the collection of all partial symmetries of $X$. The set $\mathcal I(X)$ is an inverse semigroup with zero under the operation given by composition of functions in the largest domain in which the composition may be defined. The zero element corresponds to the map assigned to an empty set. The  Wagner-Preston theorem guarantees that any inverse semigroup is a subsemigroup of $\mathcal I(X)$ for some set $X$.

A majority of inverse semigroups we encounter here are naturally `graded'. If $S$ is an inverse semigroup with possibly $0$ and $\Gamma$ is a discrete group, then $S$ is called a $\Gamma$-\emph{graded inverse semigroup} if there is a map $c : S \setminus\{0\} \to \Gamma$ such that $c(st) = c(s)c(t)$,
whenever $st \not= 0$.  For $\gamma \in \Gamma$, if we set $S_\gamma:=c^{-1}(\gamma)$, then $S$
decomposes as a disjoint union
\[ S \setminus\{0\} = \bigsqcup_{\g \in \G} S_\g, \]
and we have  $S_\b S_\g \subseteq S_{\b\g}$, if the product is not zero. 
We say that $S$ is
\emph{strongly graded} if $S_\b S_\g =S_{\b\g}$, for all $\b,\g$. 
The reader is referred to Mark Lawson's book~\cite{Lawson} for the theory of inverse semigroups.

\subsection{Examples of inverse semigroups} 

Clearly any group is an inverse semigroup without zero unless it is a trivial group. The Theorem of  Wagner-Preston shows that the partial symmetries are the `mothers' of all inverse semigroups. 

\begin{example}[\bf {Graph inverse semigroups}]\label{ingfgfgfhr1}
Directed graphs provide concrete examples for constructing a variety of combinatorial structures, such as semigroups, groupoids and algebras. We briefly recall the definition of a directed graph and the construct the first combinatorial structure out of them, namely, graph inverse semigroups. 

A \emph{directed graph} $E$ is a tuple $(E^{0}, E^{1}, r, s)$, where $E^{0}$ and $E^{1}$ are
sets and $r,s$ are maps from $E^1$ to $E^0$. We think of each $e \in E^1$ as an arrow
pointing from $s(e)$ to $r(e)$. We use the convention that a (finite) path $p$ in $E$ is
a sequence $p=\a_{1}\a_{2}\cdots \a_{n}$ of edges $\a_{i}$ in $E$ such that
$r(\a_{i})=s(\a_{i+1})$ for $1\leq i\leq n-1$. We define $s(p) = s(\a_{1})$, and $r(p) =
r(\a_{n})$. If $s(p) = r(p)$, then $p$ is said to be closed. If $p$ is closed and
$s(\a_i) \neq s(\a_j)$ for $i\neq j$, then $p$ is called a \emph{cycle}. An edge $\a$ is an exit
of a path $p=\a_1\cdots \a_{n}$ if there exists $i$ such that $s(\a)=s(\a_i)$ and
$\a\neq\a_i$. A graph $E$ is called \emph{acyclic} if there are no closed path in $E$. For a path $p$, we denote by $|p|$ the length of $p$, with the convention that  $|v|=0$.

A directed graph $E$ is said to be \emph{row-finite} if for each vertex $u\in E^{0}$,
there are at most finitely many edges in $s^{-1}(u)$. A vertex $u$ for which $s^{-1}(u)$
is empty is called a \emph{sink}, whereas $u\in E^{0}$ is called an \emph{infinite
emitter} if $s^{-1}(u)$ is infinite. If $u\in E^{0}$ is neither a sink nor an infinite
emitter, then it is called a \emph{regular vertex}.

\begin{deff} \label{definverlp} Let $E=(E^{0}, E^{1}, r, s)$ be a directed graph.  The \emph{graph inverse semigroup} $S_E$ 
is the semigroup with zero generated by the sets $E^0$ and $E^1$, together with a set $E^*= \{e^* \mid e \in E^1\}$,
satisfying the following relations:
\begin{itemize}
\item[(0)] $uv=\delta_{u, v}v$ for every $u, v\in E^{0}$;
\item[(1)] $s(e)e=er(e)=e$ for all $e\in E^{1}$;
\item[(2)] $r(e)e^{*}=e^{*}=e^{*}s(e)$ for all $e\in E^{1}$;
\item[(3)] $e^{*}f=\delta_{e, f}r(e)$ for all $e, f\in E^{1}$.
\end{itemize}
\end{deff}

For a path $p=e_1e_2\dots e_n$, denoting $p^*=e_n^*\dots e_2^* e_1^*$, one can show that elements of $S_E$ are of the form $pq^*$ for some paths $p$ and $q$ and the unique inner inverse of $pq^*$ is $qp^*$. 

This definition was first given in \cite{ash} and then in \cite{Patersonsemi} in relation with groupoids and groupoids $C^*$-algebras. The fact that Definition~\ref{definverlp} gives an inverse semigroup was checked in details in \cite[Propositions~3.1, 3.2]{Patersonsemi}. The graph inverse semigroup associated to a graph with one vertex and $n$ loops, is called \emph{Cuntz inverse semigroup} and it was defined in \cite[p. 141]{renault}.  We remark that  that the universal groupoid of $S_E$ (see \cite{st}) is the graph groupoid $\mG_E$ which will be studied in~\S\ref{gfgfgfg1}.

For a graph $E$, the inverse semigroup $S_E$ has a natural $\mathbb Z$-graded where $c(pq^*)=|p|-|q|$. We also refer the reader to \cite{zak} for further study on these inverse semigroups. 
\end{example}

\begin{example}[\bf{Exel's inverse semigroup associated to a group}]\label{exhdgtrhdf}
Any group is an inverse semigroup. In  \cite{exel1998}, Exel defined a semigroup $S(G)$ associated to the partial actions of the group $G$ on sets (Example~\ref{exphyhyh5}) and proved that this semigroup is in fact an inverse semigroup. He then established that the partial actions of $G$ on a set $X$ are in one-to-one correspondence with the action of $S(G)$ on $X$.  As the construction of $S(G)$ is very natural we give it here. 

Let $G$ be a group with unit $\varepsilon$. We define  $S(G)$ to be the semigroup generated by $\{[g]\;|\; g\in G\}$ subject to the following relations: for $g, h\in G$:
\begin{itemize}

\item [(i)] $[g^{-1}][g][h] = [g^{-1}][gh]$; 

\item[(ii)] $[g][h][h^{-1}] = [gh][h^{-1}]$; and

\item[(iii)] $[g][\varepsilon] = [g]$.
\end{itemize}

Observe that $[\varepsilon][g]=[gg^{-1}][g]=[g][g^{-1}][g]=[g][g^{-1}g]=[g][\varepsilon]=[g]$. Then $S(G)$ is a semigroup with unit $[\varepsilon]$. 
It was proved in \cite[Theorem~3.4]{exel1998} that $S(G)$ is an inverse semigroup and each element of $x\in S(G)$ can be written uniquely as 
$x=[t_1][t_1^{-1}]\dots [t_r][t_r^{-1}][g]$. This gives that $S(G)$ is also a $G$-graded inverse semigroup. 

Further in~\cite{bussexel} Buss and Exel showed that starting from an inverse semigroup $G$, a similar construction as above (replacing $g^{-1}$ by $g^*$) is also an inverse semigroup. 

\end{example}

\section{Groupoids}\label{groupoid4}

\subsection{Groupoids} 

The use of groupoids to study structures whose operations are partially defined is firmly recognised~\cite{brown1,higgins,Lawson, weinstein}. We start by recalling the definition of a groupoid with a suitable topology, {\it i.e.}, an ample groupoid.  We will eventually describe a ring of $R$-valued continuous functions on an ample groupoid, where $R$ is a (commutative, unital) ring.   These are the main objects of this survey, namely Steinberg algebras.

A \emph{groupoid} is a small category in which every morphism is invertible. It can also be viewed as a generalization of a group which has a partially defined binary operation.  
Let $\mG$ be a groupoid. If $x\in\mG$, $\dom(x):=x^{-1}x$ is the \emph{domain} of $x$ and $\ran(x):=xx^{-1}$ is
its \emph{range}. Thus the pair $(x,y)$ in the category $\mG$ is composable if and only if $\ran(y)=\dom(x)$ and in this case $xy \in \mG$. Denote 
$\mG^{(2)} := \{(x,y) \in \mG \times \mG : \dom(x) = \ran(y)\}$. The set
$\mG^{(0)}:=\dom(\mG)=\ran(\mG)$ is called the \emph{unit space} of $\mG$. Note that we identify the objects of the category $\mG$ with $\mGo$. 
which are the identity morphisms of the category $\mG$ in the sense that $x\dom(x)=x$ and $\ran(x)x=x$ for all $x \in \mG$. 

The collection of morphisms whose domain and range are a fixed unit $u \in \mGo$ is a group and the collection of all of these groups is called the isotropy bundle $\operatorname{Iso}(\mG)$, that is, 
\[\operatorname{Iso}(\mG) : = \{\gamma \in \mG : d(\gamma) = r(\gamma)\}.\]

For subsets 
$U,V\subseteq \mG$, we define
\begin{equation}\label{pofgtryhf}
    UV=\big \{ x y  \mid x \in U, y \in V \text{ and } \dom(x)=\ran(y) \big\},
\end{equation}
and
\begin{equation}\label{pofgtryhf2}
U^{-1}=\big \{x^{-1} \mid x \in U \big \}.
\end{equation}

If $\mG$ is a groupoid and $\Gamma$ is a group, then $\mG$ is called a $\Gamma$-\emph{graded groupoid} if there is functor $c:\mG \rightarrow \Gamma$, {\it i.e.,} there is a function $c : \mG \to \G$ such that $c(x)c(y) = c(x y )$ for all $(x,y)
\in \mG^{(2)}$. For $\gamma \in \Gamma$, if we set $\mG_\gamma:=c^{-1}(\gamma)$, then $\mG$
decomposes as a disjoint union
\[ \mG= \bigsqcup_{\g \in \G} \mG_\g, \]
and we have  $\mG_\b \mG_\g \subseteq \mG_{\b\g}$. 
We say that $\mG$ is
\emph{strongly graded} if $\mG_\b \mG_\g =\mG_{\b\g}$, for all $\b,\g$. For $\g
\in \G$, we say that $X\subseteq \mG$ is $\g$-graded if $X\subseteq \mG_\g$. We
have $\mG^{(0)} \subseteq \mG_\varepsilon$, so $\mG^{(0)}$ is
$\varepsilon$-graded, where $\varepsilon$ is the identity of the group $\Gamma$. Graded groupoids were studied in ~\cite{chr}.

\subsection{Topological groupoids}  A topological groupoid is a groupoid endowed with a topology under which the inverse map
is continuous, and composition is continuous with respect to the relative product
topology on $\mG^{(2)}$.   An \emph{\'etale} groupoid is a 
topological groupoid $\mG$ such that the
domain map $d$ is a local homeomorphism. In this case, the range map $r$ is also a local
homeomorphism.
An \emph{open bisection} of $\mG$ is an open subset $U\subseteq \mG$ such
that $\dom|_{U}$ and $\ran|_{U}$ are homeomorphisms onto an open subset of $\mG^{(0)}$. Notice that a groupoid is \'etale if and only if it has a basis of open bisections.  

We say that a topological groupoid $\mG$ is \emph{ample} if there is a basis of compact open bisections.  An ample groupoid is automatically \'etale, locally compact and $\mG^{(0)}$ is an open subset of $\mG$.  The terminology in the literature is inconsistent:  sometimes the term `\'etale' also includes the assumptions of local compactness and $\mGo$ Hausdorff.   
We will focus on the situation where  $\mG$ is Hausdorff ample so these two assumptions are automatically true.  We discuss non-Hausdorff groupoids briefly in Section~\ref{sec:nonH}.

In the topological setting, we call a groupoid $\mG$, a $\G$-graded groupoid, if
the functor $c : \mG \to \G$ is continuous with respect to the discrete topology on $\Gamma$; such a function $c$ is called a \emph{cocycle} on $\mG$.

\begin{lemma} \label{fghfhfhuyt} 
Let $\mG$ be an \'etale groupoid.
If $\mGo$ is a finite, then $\mG$ is a discrete topological space. 
\end{lemma}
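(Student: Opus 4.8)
The plan is to reduce the statement to the discreteness of the unit space and then propagate it to all of $\mG$ through the local homeomorphism $d$. Concretely, I would first observe that it suffices to prove that $\mGo$ is discrete. Once that is known, fix an arbitrary $x \in \mG$; since $\mG$ is \'etale it has a basis of open bisections, so I may choose an open bisection $U \ni x$ with $d|_U$ a homeomorphism onto the open set $d(U) \subseteq \mGo$. As $\mGo$ is discrete, so is its subspace $d(U)$, whence $\{d(x)\}$ is open in $d(U)$ and therefore $\{x\} = (d|_U)^{-1}(\{d(x)\})$ is open in $U$, hence open in $\mG$. Since $x$ was arbitrary, every singleton of $\mG$ is open, i.e.\ $\mG$ is discrete.

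It then remains to show that the finite space $\mGo$ is discrete. Here I would invoke the standing hypothesis that the unit space is Hausdorff, which holds throughout our setting where $\mG$ is Hausdorff ample: a finite Hausdorff space is automatically discrete, because in a $T_1$ space every finite set is closed, and in particular the complement $\mGo \setminus \{u\}$ of any point is finite, hence closed, so each singleton $\{u\}$ is open. This is the only place where a separation axiom enters, and it is genuinely needed.

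Accordingly, the main obstacle is not the topological bookkeeping but recognising that finiteness of $\mGo$ by itself does not force discreteness. The two-point set $\{0,1\}$ carrying the Sierpi\'nski topology, regarded as a groupoid with only identity morphisms, is \'etale in the bare sense of the definition (both $d$ and $r$ are the identity, hence local homeomorphisms, and multiplication and inversion are trivially continuous), yet it is not discrete. Thus any correct proof must use the Hausdorffness of $\mGo$, and the content of the lemma is exactly the implication ``finite $+$ Hausdorff $\Rightarrow$ discrete'' transported along the \'etale map $d$. Once this point is pinned down, the remaining steps are routine and rely only on the basis of open bisections supplied by the \'etale condition.
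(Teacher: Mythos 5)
Your proof is correct and follows essentially the same route as the paper's: both first deduce that the finite Hausdorff space $\mGo$ is discrete, then use an open bisection through an arbitrary element together with continuity and injectivity of a structure map ($d$ in your case, $r$ in the paper's) to isolate each singleton as an open set. Your added remark that Hausdorffness of the unit space is genuinely needed (e.g.\ the Sierpi\'nski-space counterexample) is a worthwhile observation but does not change the argument.
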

\begin{proof}
Since $\mGo$ is finite and Hausdorff, it is discrete.    
Fix $\gamma \in \mG$.  We show that $\{\gamma\}$ is open. 
Since $\mG$ is \'etale, $\gamma$ is contained in an open bisection $U$.
Also, since $r$ is continuous, $r^{-1}(\{r(\gamma)\})$ is open.
But $r$ is injective on $U$ so 
\[r^{-1}(\{r(\gamma)\}) \cap U = \{\gamma\}.\qedhere\]
\end{proof}

We will determine the Steinberg algebra associated to finite $\mGo$ in Proposition~\ref{gfhgdhr22}.

\subsection{Examples of groupoids} 
\begin{example}[\bf {Transitive groupoids}]\label{exphyhyh}  A groupoid is called \emph{connected} or \emph{transitive} if for any $u,v \in \mGo$, there is a $x\in \mG$ such that $u=s(x)$ and $r(x)=y$. 

Let $G$ be a group and $I$ a non-empty set. The set $I\times G \times I$, considered as morphisms, forms a groupoid where the composition defined by $(i,g,j)(j,h,k)=(i,gh,k)$. One can show that this is a transitive groupoid and any transitive groupoid is of this form (\cite[Ch.~3.3, Prop.~6]{Lawson}). If $I=\{1,\dots, n\}$, we denote $I\times G \times I$ by $n\times G \times n$. Note that this groupoid is naturally strongly $G$-graded. This seems to be the first appearance of groupoids after they were introduced by Brandt in 1926 \cite{brandt} (see  in \cite{brownbang} for a nice history of groupoids). 
\end{example}

In the next three examples we explore how a group action on a (combinatorial) structure can be naturally captured by a groupoid. The first example is the action of a group on a set, and we then continue with a partial action of a group and inverse semigroup on a set. Although the (partial) action of an inverse semigroup on a set would be the most general case covering the previous two examples, for pedagogical reason we introduce these step by step. 

\begin{example}[\bf {Transformation groupoid arising from a group action}]\label{exphyhyh2}
Let $G$ be a group acting on a set $X$, {\it i.e.,}  there is a group homomorphism $G\xra {\rm \Iso}(X)$, where ${\Iso}(X)$ consists of bijective maps from $X$ to $X$ which is a group with respect to composition. Let 
\begin{equation}\label{globiala}
\mG=G \times X
\end{equation}
 and define the groupoid structure: 
$(g, h y)\cdot(h,y)=(g h ,y)$, 
and $(g,x)^{-1}=(g^{-1}, g x)$. 
Then $\mG$ is a groupoid,  
called the \emph{transformation groupoid} 
arising from the action of $G$ on $X$ (for short, $G \curvearrowright X$). 
The unit space $\mG^{(0)}$ is canonically identified with $X$ 
via the map $(\varepsilon,x)\mapsto x$. The natural cocycle $\mG\rightarrow G, (g,x)\mapsto g$, makes $\mG$ a strongly $G$-graded groupoid. Note that the range and source map would distinguish an element of this groupoid up to the stabiliser. Namely, $\ran(g,x)=x=\ran(h,x)$ and $\dom(g,x)=gx=hx=\dom(h,x)$. But when we consider the grading then we can distinguish these elements as well. 
When $X$ is a Hausdorff topological space and $G$ is a discrete group, then $\mG$ is an \'etale topological groupoid with respect to the product topology.  If, in addition, $X$ has a basis of compact open sets, then $\mG$ is ample.
\end{example}

\begin{example}[\bf {Transformation groupoid arising from a partial group action}]\label{exphyhyh5}

A \emph{partial action} of a group $G$ on a set $X$ is a data $\phi=(\phi_{g}, X_g, X)_{g\in G}$, where for each $g\in G$, $X_g$ is a subset of $X$ and $\phi_g:X_{g^{-1}}\xra X_g$ is a bijection such that 
\begin{itemize}
\item[(i)]$X_{\varepsilon} =X$ and $\phi_{\varepsilon}$ is the identity on $X$, where $\varepsilon$ is the identity of the group $G$;

\item[(ii)] $\phi_g(X_{g^{-1}}\cap X_h) = X_g\cap X_{gh}$ for all $g, h\in G$;

\item[(iii)] $\phi_g(\phi_h(x)) =\phi_{gh}(x)$ for all $g, h\in G$ and $x\in X_{h^{-1}}\cap X_{h^{-1}g^{-1}}$.
\end{itemize}

Although the above construction gives a well-define map $\pi: G \rightarrow \mathcal I(X), g \mapsto \phi_g$, this map is not a homomorphism, {\it i.e.,} $\phi_{gh}\not= \phi_g \phi_h$. However there is a one-to-one correspondence between these partial actions and the actions of inverse semigroup $S(G)$ on $X$ (see Example~\ref{exhdgtrhdf})

 Let $\phi=(\phi_g, X_g, X)_{g\in G}$ be a partial action of $G$ on $X$.   Consider the $G$-graded groupoid \begin{equation}
 \label{groupoid}
 \mG_\phi=\bigcup_{g \in G} g \times X_g,
 \end{equation} whose composition and inverse maps are given by $(g,x)(h,y)=(gh,x)$ if $y=\phi_{g^{-1}}(x)$ and $(g,x)^{-1}=(g^{-1},\phi_{g^{-1}}(x))$. Here the range and source maps are given by $\ran(g,x)=(\varepsilon, x)$, $\dom(g,x)=(\varepsilon, \phi_{g^{-1}}(x))$ with $\varepsilon$ the identity of $G$.  The unit space of $\mG_\phi$ is identified with $X$. 
 
 In case that $X$ is a topological space, we assume $X_g\sub X$ is an open subset and each $\phi_g:X_{g^{-1}}\xra X_g$ is a homeomorphism, for $g\in G$. In order to obtain an ample groupoid, we further assume that $X$ is a Hausdorff topological space that has a basis of compact open sets,  each $X_g$ is a clopen subset of $X$, and $G$ is a discrete group. 
 The topology of $\mG_\phi$  which inherited from the product topology $G\times X$ gives us an Hausdorff ample groupoid. 


In fact one can further generalise this to the setting of partial action of an inverse semigroup on sets, topological spaces and rings. In \S\ref{cross4} we will relate partial inverse semigroup rings coming out of this partial actions to Steinberg algebras.

\end{example}

\begin{example}[\bf {Transformation groupoid arising from an inverse semigroup action; groupoid of germs}]\label{exphyhyh7}

We start with a more concrete example of the groupoid of germs and then move to a more abstract construction of the groupoid of germs of an inverse semigroup acting on a space. In the topological setting, these are one of the main sources of \'etale groupoids.

Let $X$ be a nonempty set and let $S=\mathcal I(X)$ be the inverse semigroup of partial symmetries. The $S$-germ is a pair $(s,x)\in S\times X$, where $x\in \dom(s)$,  modulo the equivalence relation of germs $(s,x) \sim (t,y)$ if $x=y$ and the restriction of $s$ and $t$ coincides on a subset containing $x$. The groupoid operations defined by 
\[(s, t y)(t,y)=(st,y), \, \, (s,x)^{-1}=(s^{-1},sx).\] 

In a more abstract setting, let $S$ be an inverse semigroup acting on a set $X$, {\it i.e.,} there is a semigroup homomorphism $S\rightarrow \mathcal I(X)$.  Let 
\begin{equation}\label{nearneahdu}
\mathscr G=\bigcup_{s\in S} s\times X_{s^*s}.
\end{equation}
and define the groupoid structure: 
$(s, ty)\cdot(t,y)=(st ,y)$, 
and $(s,x)^{-1}=(s^*, s x)$. One can check that these operations are well-defined and $\mathcal G$ is a groupoid.  However this groupoid is too large for us and we need to invoke the equivalence of germs. 

The \emph{groupoid of germs} $\mG =S\ltimes X$ is defined (with an abuse of notation) as $\mathscr G$ modulo the equivalence relation $(s,x)\sim (t,y)$ if $x=y$ and there exists an idempotent $e$ such that  $x\in X_e$ and $se=re$. We denote the equivalence class of $(s,x)$ by $[s,x]$ and call it the germ of $s$ at $x$. It is a routine exercise to show that $\mG$ with $[s,ty][t,y]=[st,y]$ and $[s,x]\inv = [s^*,sx]$ is in fact a groupoid. 
Note that if $S$ is a group, then there are no identifications and we retrieve the transformation groupoid of Example~\ref{exphyhyh2}. 

 When $X$ is a Hausdorff topological space, one can show that 
 $[s\times U] := \{[s,x]\mid x\in U\}$, where $U\subseteq X_{s^*s}$ is open, 
 is a basis for a topology on $\mG$.   With this topology, $\mG$ is \'etale and
  $[s\times U]$ is an open bisection.  If $X$ has a basis of compact open sets, then $\mG$ is Hausdorff ample.    Further by \cite[Proposition~6.2]{exel20081} if $S$ is a semilattice and $X_e$ are clopen for $e\in E(S)$, then $\mG$ is Hausdorff.

\end{example}

\begin{example}[\bf{Underlying groupoid of an inverse semigroup}]\label{underhnyfuhr5}
Let $S$ be an inverse semigroup. The maps 
\begin{align*}
\dom: S &\longrightarrow E(S) &  \ran: S&\longrightarrow E(S)\\
s &\longmapsto s^*s  & s &\longmapsto ss^*
\end{align*}
considered as the source and range maps make $S$ into a groupoid with the product of the semigroup as the composition of the groupoid. The unit space is $E(S)$. Note that if $S$ is graded inverse semigroup, so is the underlying groupoid of $S$ and the strongly graded property passes from one structure to another. 
\end{example}


\subsection{Inverse semigroup of bisections of a groupoid} \label{hausfgrtgeud7}

Given an ample Hausdorff groupoid,  the inverse semigroup made up of all the compact open bisections plays an important role. In fact, the Steinberg algebra associated to a Hausdorff ample groupoid is the inverse semigroup ring of compact open bisections modulo their unions (see Definition~\ref{defsteinberg}). In the following,  we describe this inverse semigroup for a graded topological groupoid. Both the grading and the topology can be stripped away. 

Let $\mG$ be a $\Gamma$-graded Hausdorff ample groupoid.  Set 
\begin{equation}\label{jhfgyt7595}
\mG^{h}=\{U\; |\; U \text{~is a graded compact open bisection of~} \mG\}.
\end{equation} 
Then $\mG^{h}$ is an inverse semigroup under the multiplication $U.V=UV$ and inner inverse $U^*=U^{-1}$ as in (\ref{pofgtryhf}) and (\ref{pofgtryhf2})  (see \cite[Proposition 2.2.4]{paterson}).  Furthermore, the map $c:\mG^{h} \backslash \emptyset \rightarrow \Gamma, U\mapsto \gamma$, if $U\subseteq \mG_\gamma$, makes $\mG^{h}$ a graded inverse semigroup with $\mG^h_{\gamma}=c^{-1}(\gamma)$, $\gamma \in \Gamma$, as the graded components. 
 Observe that in  the inverse semigroup $\mG^{h}$, $B\leq C$ if and only if $B\sub C$ for $B, C\in \mG^{h}$. If from the outset we consider $\mG$ as a trivially graded groupoid, then we have an inverse semigroup consisting of all compact open bisections. In this case we denote the inverse semigroup by $\mG^a$. There are other notations for this semigroup in literature, such as $\mathcal S(\mG)$ in \cite{exel20081} or $\mG^{co}$ in \cite{paterson}. 

There is a natural action of inverse semigroup $\mG^h$ on the $\mGo$. In fact the groupoid of germs (as in Example~\ref{exphyhyh7}) of this action is $\mG$ itself and this allows us to relate the partial crossed product of construction to the concept of Steinberg algebras (see Theorem~\ref{thmpsisr}). We describe next this action. In fact in what follows we will construct a homomorphism of semigroups $\pi: \mG^h \rightarrow \mathcal I(\mGo)$. 

For each $B\in \mG^{h}$, $BB^{-1}$ and $B^{-1}B$ are compact open subsets of $\mGo$. Define 
\begin{align}
\pi_B: B^{-1}B &\longrightarrow B B^{-1}\label{fhfhgjg8585}\\
u &\longmapsto r(Bu)\notag
\end{align}
Since $B$ is a bisection, $Bu$ consists of only one element of $\mG$ and thus the map $\pi_B$ is well defined. 
 Observe that $\pi_B$ is a bijection with inverse $\pi_{B^{-1}}$. We claim that $\pi_B$ is a homeomorphism for each $B\in \mG^{h}$. Take any open subset $O\sub U_B$. Observe that $\pi_B^{-1}(O)=d(r^{-1}(O)\cap B)$ is an open subset of $U_{B^{-1}}$. Thus $\pi_B$ is continuous. Similarly, $\pi_B^{-1}$ is continuous. One can check that for compact open bisections $B$ and $C$, $\pi_B \pi _C =\pi _{BC}$, and thus $\pi: \mG^h \rightarrow \mathcal I(\mGo)$ is a homomorphism of inverse semigroups. If the grade group $\G$ is considered to be trivial, then we have a homomorphism $\pi: \mG^a \rightarrow \mathcal I(\mGo)$. This homomorphism is injective if, in some sense, there isn't too much \emph{isotropy} which we show in Lemma~\ref{campfresh} after introducing some more terminologies. 
 
 We say a topological groupoid $\mG$ is \emph{effective} if the interior of the isotropy bundle is just the unit space, that is
 \[\operatorname{Iso}(\mG)^{\circ} = \mGo.\]  
 Thus in an effective ample groupoid, if we have a compact open bisection $B$ such that every element $\gamma \in B$ has the property $s(\gamma)=r(\gamma)$, then $B \subseteq \mGo$. 
 
 We say a subset $U$ of the
unit space $\mG^{(0)}$ of $\mG$ is \emph{invariant} if $d(\g)\in U$ implies $r(\g)\in U$;
equivalently,
\[
    r(d^{-1}(U))=U=d(r^{-1}(U)).
\]

 For an invariant $U \subseteq \mG^{(0)}$,  we write $\mG_{U} := d^{-1}(U)$ which
coincides with the restriction \[\mG|_{U}=\{x\in\mG \mid  d(x)\in U,  r(x)\in U\}.\] Notice
that  $\mG_{U}$ is a groupoid with unit space $U$.

 We say $\mG$ is  \emph{strongly effective} if for every nonempty closed invariant subset $D$ of $\mG^{(0)}$, the groupoid $\mG_{D}$ is effective. These assumptions play important roles when classifying ideals of  Steinberg algebras (see \S\ref{dhfdujjdefjde3}). 
 
\begin{lemma}\label{campfresh}
Let $\mG$ be an ample groupoid. Then the morphism $\pi : \mG^h \rightarrow \mathcal I(\mGo)$ is an injective if and only if $\mG$ is effective.
\end{lemma}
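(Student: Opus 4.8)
The plan is to prove both implications by translating the condition $\pi_B=\pi_C$ into a statement about a product bisection lying in the isotropy bundle. First I record the concrete description of $\pi$: for a compact open bisection $B$ and a unit $u\in B^{-1}B$, the set $Bu$ is the single element $b\in B$ with $d(b)=u$, so that $\pi_B(u)=r(b)$. Two elements of $\mathcal I(\mGo)$ coincide exactly when they have the same domain and agree pointwise, so $\pi_B=\pi_C$ amounts to $B^{-1}B=C^{-1}C$ together with the requirement that $r(b)=r(c)$ whenever $b\in B$, $c\in C$ and $d(b)=d(c)$.

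For the forward direction (effective $\Rightarrow$ injective), I would suppose $\pi_B=\pi_C$ and form the compact open bisection $D:=CB^{-1}$, showing $D\subseteq\operatorname{Iso}(\mG)$. Indeed, a typical element is $cb^{-1}$ with $c\in C$, $b\in B$ and $u:=d(c)=d(b)\in C^{-1}B^{-1}B=B^{-1}B$; then $r(c)=\pi_C(u)=\pi_B(u)=r(b)$, whence $d(cb^{-1})=r(b)=r(c)=r(cb^{-1})$, so $cb^{-1}$ is isotropy. Since every element of the compact open bisection $D$ has equal domain and range, the effectiveness hypothesis (in the form noted right after the definition of effective) yields $D=CB^{-1}\subseteq\mGo$. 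I would then unwind this inclusion: for $c\in C$ pick the unique $b\in B$ with $d(b)=d(c)$; the element $cb^{-1}\in\mGo$ is the unit $r(b)$, and multiplying on the right by $b$ gives $c=cb^{-1}b=r(b)b=b\in B$. Thus $C\subseteq B$, and the symmetric argument applied to $BC^{-1}$ gives $B=C$.

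For the converse I argue by contrapositive: assume $\mG$ is not effective. Because $\mGo$ is open and contained in $\operatorname{Iso}(\mG)$, non-effectiveness means $\operatorname{Iso}(\mG)^{\circ}\supsetneq\mGo$, so some $\gamma$ lies in the interior of the isotropy bundle without being a unit. Ampleness lets me choose a compact open bisection $B$ with $\gamma\in B\subseteq\operatorname{Iso}(\mG)$, so $B\not\subseteq\mGo$. Setting $C:=B^{-1}B\subseteq\mGo$, I would check that $B\subseteq\operatorname{Iso}(\mG)$ forces $\pi_B=\operatorname{id}_{B^{-1}B}$ (since $r(b)=d(b)$ for each $b\in B$) and likewise $\pi_C=\operatorname{id}_C=\operatorname{id}_{B^{-1}B}$, whereas $B\neq C$ because $C\subseteq\mGo$ but $B$ contains the non-unit $\gamma$. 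Hence $\pi$ fails to be injective.

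I expect the forward direction to be the crux. The real content is recognising that $CB^{-1}$ is the correct object to test against the isotropy bundle, and then carefully pushing the inclusion $CB^{-1}\subseteq\mGo$ back down to the elementwise identity $B=C$; the bookkeeping with domains, ranges and composability is where a careless step could hide. The converse is comparatively routine, being essentially the production of a witness bisection sitting inside the isotropy bundle but not in the unit space, which is exactly what effectiveness forbids.
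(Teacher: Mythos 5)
Your proof is correct, and both directions are carried out carefully; note that the paper itself gives no proof of this lemma, deferring instead to \cite[Lemma~3.1]{bcfs}, and your argument (testing $CB^{-1}$ against the interior of the isotropy, and exhibiting a witness bisection inside $\operatorname{Iso}(\mG)^{\circ}\setminus\mGo$ for the converse) is the standard one. The only refinement worth adding is that the lemma is stated for the graded semigroup $\mG^h$, so in the converse you should shrink your bisection $B$ to $B\cap\mG_{c(\gamma)}$, which is still open since the cocycle is continuous and $\G$ is discrete, to ensure $B$ is homogeneous; its source $B^{-1}B\subseteq\mGo\subseteq\mG_\varepsilon$ is then automatically homogeneous as well.
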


For more equivalences of effective groupoids, see \cite[Lemma 3.1]{bcfs}. 

\subsection{Graph groupoids}\label{gfgfgfg1}
Our next goal is to describe groupoids associated to directed graphs. There is a general construction of a groupoid from a topological space $X$ and a local homeomorphism $\sigma:X \rightarrow X$, called a \emph{Deaconu-Renault groupoid} (see~\cite{re2000}). The graph groupoids are a special case. We briefly recall this general construction. 

Let $\sigma:X \rightarrow X$ be a local homeomorphism. Consider 
\begin{equation}\label{reangroup}
\mG(X,\sigma)=\{(x,m-n,y) \mid m,n \in \mathbb N, \sigma^m(x)=\sigma^n(y) \},
\end{equation}
with the groupoid structure inherited from the transitive groupoid $X\times \mathbb Z \times X$.  Note that $\mG(X,\sigma)$ is not transitive in general. 

When $X$ is a Hausdorff space, sets of the form 
\[Z(U,m,n,V)=\{(x,m-n,y)\mid (x,y) \in U\times V, \sigma^m(x)=\sigma^n(y) \},\] where $U$ and $V$ are open subsets of $X$ are a basis for a topology on $\mG(X, \sigma)$ making it a Hausdorff \'etale groupoid.  When $X$ also has a basis of compact open sets, the groupoid is Hausdorff ample. 

 To any graph $E$ one can associate a groupoid $\mG_E$, called the \emph{boundary path groupoid}, which we will just call the  \emph{graph groupoid} of $E$. This is the groupoid that relates the Steinberg algebras to the subject of Leavitt path algebras, as it's foundation is to relate graph $C^*$-algebras and groupoid $C^*$-algebras. To be precise, one can show there is a $\mathbb Z$-graded $*$-isomorphism $A_R(\mG_E) \cong L_R(E)$ (see Example~\ref{lpaexam12}). 

Let $E=(E^{0}, E^{1}, r, s)$ be a directed graph (see Example~\ref{ingfgfgfhr1}). We denote by $E^{\infty}$ the set of
infinite paths in $E$ and by $E^{*}$ the set of finite paths in $E$. Set
\[
X := E^{\infty}\cup  \{\mu\in E^{*}  \mid   r(\mu) \text{ is not a regular vertex}\}.
\]
Let
\[
\mG_{E} := \big \{(\a x,|\a|-|\b|, \b x) \mid   \a, \b\in E^{*}, x\in X, r(\a)=r(\b)=s(x) \big \}.
\]
We view each $(x, k, y) \in \mG_{E}$ as a morphism with range $x$ and source $y$. The
formulas $(x,k,y)(y,l,z)= (x,k + l,z)$ and $(x,k,y)^{-1}= (y,-k,x)$ define composition
and inverse maps on $\mG_{E}$ making it a groupoid with 
\[\mG_{E}^{(0)}=\{(x, 0, x) \mid x\in X\},\] which we will identify with the set $X$.

Next, we describe a topology on $\mG_{E}$ which is  ample and Hausdorff. For $\mu\in E^{*}$ define
\[
Z(\mu)= \{\mu x \mid x \in X, r(\mu)=s(x)\}\subseteq X.
\]
For $\mu\in E^{*}$ and a finite $F\subseteq s^{-1}(r(\mu))$, define
\[
Z(\mu\setminus F) = Z(\mu) \setminus \bigcup_{\a\in F} Z(\mu \a).
\]
The sets $Z(\mu\setminus F)$ constitute a basis of compact open sets for a locally
compact Hausdorff topology on $X=\mG_{E}^{(0)}$ (see \cite[Theorem 2.1]{we}).

For $\mu,\nu\in E^{*}$ with $r(\mu)=r(\nu)$, and for a finite $F\subseteq E^{*}$ such
that $r(\mu)=s(\a)$ for $\a\in F$, we define
\[
Z(\mu, \nu)=\{(\mu x, |\mu|-|\nu|, \nu x) \mid  x\in X, r(\mu)=s(x)\},
\]
and then
\[
Z((\mu, \nu)\setminus F) = Z(\mu, \nu) \setminus \bigcup_{\a\in F}Z(\mu\a, \nu\a).
\]
The sets $Z((\mu, \nu)\setminus F)$ constitute a basis of compact open bisections for a
topology under which $\mG_{E}$ is a Hausdorff ample groupoid.

In the case of the graph groupoid $\mG_E$, the topological assumptions on $\mG_E$ can be described in terms of the geometry of the graph $E$. We collect them here.

\begin{thm}\label{gfhtyfy6749}
Let $E$ be a directed graph and $\mG_E$ the graph groupoid associated to $E$. We have the followings: 
\begin{enumerate}[\upshape(1)]

\item The unit space $\mGo_E$  is finite if and only if $E$ is a no exit finite graph.  \cite{steinbergchain}

\medskip

\item The unit space $\mGo_E$  is compact if and only if $E$ has finite number of vertices.

\medskip 

\item The unit space $\mGo_E$ is topologically transitive if and only if $E$ is downward directed. \cite{steinbergprime}

\medskip

\item  The unit space $\mGo_E$ is effective if and only if $E$ satisfies condition (L).  \cite{steinbergprime}
\end{enumerate}
\end{thm}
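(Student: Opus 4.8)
The plan is to translate each topological property of $\mGo_E\cong X$ into the combinatorics of $E$ using the explicit basis $\{Z(\mu\setminus F)\}$ together with the description of points of $X$ as (in)finite paths. Throughout I would rely on two elementary facts. First, the sets $Z(v)=\{x\in X\mid s(x)=v\}$ for $v\in E^0$ form a partition of $X$ into nonempty clopen sets: each is nonempty because every vertex is the source of \emph{some} element of $X$ (follow edges out of a regular vertex until one either reaches a sink or infinite emitter, giving a finite path in $X$, or never stops, giving an infinite path). Second, up to refinement every nonempty basic open set contains one of the form $Z(\mu)$, so I may test each property on these. I would also use repeatedly that $\gamma=(\alpha x,|\alpha|-|\beta|,\beta x)$ has source $\beta x$ and range $\alpha x$, so two points of $X$ share a $\mG_E$-orbit iff they are shift-tail equivalent.

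For (2) I would simply observe $X=\bigsqcup_{v\in E^0}Z(v)$: if $E^0$ is finite this is a finite union of compact sets and hence compact; if $E^0$ is infinite the $Z(v)$ form an open cover of $X$ with no finite subcover, so $X$ is not compact. For (3) I would use the open-set form of topological transitivity (for all nonempty open $U,V\subseteq\mGo$ there is $\gamma$ with source in $U$ and range in $V$), tested on pairs $Z(u),Z(v)$ with $u,v\in E^0$. If $E$ is downward directed there is a vertex $w$ with a path $\mu$ from $u$ to $w$ and $\nu$ from $v$ to $w$; for any $x\in Z(w)$ the element $(\mu x,|\mu|-|\nu|,\nu x)$ has source $\nu x\in Z(v)$ and range $\mu x\in Z(u)$. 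Conversely a connecting element for $Z(u),Z(v)$ has source and range sharing a common tail $x$, whose source $w=s(x)$ is reached from both $u$ and $v$, which is exactly downward directedness.

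For (4) I would prove the dictionary ``effective fails $\iff$ some cycle has no exit'' (i.e.\ condition (L) fails). If a cycle $c$ based at $v$ has no exit, then every vertex of $c$ emits a unique edge, so $Z(v)=\{c^\infty\}$ is isolated and $Z(c,v)=\{(c^\infty,|c|,c^\infty)\}$ is an open subset of $\Iso(\mG_E)$ meeting the complement of $\mGo_E$; hence $\Iso(\mG_E)^\circ\neq\mGo_E$. Conversely, if $\mG_E$ is not effective I would pick a basic open bisection $Z((\mu,\nu)\setminus F)\subseteq\Iso(\mG_E)$ containing a non-unit point, forcing $\mu\neq\nu$ and $\mu x=\nu x$ for every element. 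Comparing lengths rules out $|\mu|=|\nu|$, so (replacing the bisection by its inverse if needed) $\mu=\nu\mu'$ with $\mu'$ a nonempty cycle at $r(\nu)$ and $\mu' x=x$, i.e.\ $x=(\mu')^\infty$ for every $x$ in the source projection. That projection is a nonempty open set consisting of a single point, so $\nu(\mu')^\infty$ is isolated and $\mu'$ has no exit.

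Part (1) is where I expect the real work, and I would handle it last. The forward direction is by contraposition: infinitely many vertices, an infinite emitter, or a cycle with an exit each manufactures infinitely many distinct elements of $X$. For the cycle-with-exit case, the paths $c^k$ followed by the exit edge branch off at distinct depths (neither is a prefix of another) and extend to distinct points of $X$; so finiteness of $X$ forces $E$ finite with no cycle having an exit. The harder converse---that a finite graph with no cycle having an exit has finite $X$---is the main obstacle. Here I would exploit that ``no exit'' makes every cycle vertex emit a \emph{unique} edge, so once a path reaches a cycle vertex it continues deterministically forever. Then paths ending at sinks must avoid cycle vertices and live in the finite acyclic remainder (finitely many paths), while every infinite path is a finite acyclic prefix leading into a cycle vertex followed by the forced cycle tail (again finitely many). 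The delicate points I would need to argue carefully are that the non-cycle part is genuinely acyclic with only finitely many paths, and that every infinite path must eventually reach a cycle vertex---this is where finiteness of $E^0$ is essential, since an infinite path in a finite graph must repeat a vertex and hence meet a cycle.
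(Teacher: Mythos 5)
The paper does not actually prove this theorem: it is stated as a dictionary compiled from \cite{steinbergchain} and \cite{steinbergprime}, where the results are obtained inside Steinberg's general \'etale-groupoid framework. Your proposal is therefore a genuinely different (and more self-contained) route: a direct verification from the cylinder-set basis $Z(\mu\setminus F)$ and the identification of orbits with shift-tail equivalence classes. The mathematical content is correct. The decomposition $X=\bigsqcup_{v}Z(v)$ into nonempty compact open sets settles (2); the orbit computation settles (3); the ``isolated point over a no-exit cycle'' dichotomy settles (4); and your analysis of (1) correctly isolates the two delicate points (that the non-cycle part is finite and acyclic, and that an infinite path in a finite graph must meet a cycle, which then propagates deterministically). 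What this buys is an elementary, graph-theoretic proof accessible without Steinberg's machinery; what it loses is the generality of the cited arguments, which apply beyond graph groupoids.

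Two places deserve one more line each. In (3), testing transitivity only on the vertex cylinders $Z(u),Z(v)$ suffices, but only because every point of $Z(\mu)$ is shift-tail equivalent to a point of $Z(r(\mu))$ and conversely, so the orbit of $Z(\mu)$ equals the orbit of $Z(r(\mu))$; you state the shift-tail fact, but you should make this saturation step explicit, since $Z(\mu)$ for $|\mu|>0$ contains no vertex cylinder. In (4), the path $\mu'$ you extract is a priori only a \emph{closed} path, not a cycle, and the passage from ``$\nu(\mu')^\infty$ is isolated'' to ``$\mu'$ has no exit'' needs the approximation argument you invoke in (1) (an exit at depth $k$ produces points of $X$ agreeing with $\nu(\mu')^\infty$ to depth $k$ but distinct from it, contradicting isolation); once $\mu'$ has no exit, determinism forces it to be a power of a genuine cycle without exit, so condition (L) fails as required.
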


The following table summarises the properties of the graph $E$ and the corresponding properties of the graph groupoid $\mG_E$.  

\begin{center}
\begin{tabular}{|l|l|}
\hline
{\bf Graph $E$ Property \qquad \qquad} &  {\bf  Groupoid $\mG_E$ property \qquad \qquad}\\
\hline
no cycles &  principle (istropy trivial)\\
\hline
condition (L) & effective\\
\hline
condition (K) & strongly effective \\
\hline
cofinal  &  minimal \\
\hline
$E^0$ finite  & $\mG^{(0)}$ compact\\
\hline
$E$ finite and no cycles & $\mG_E$ discrete\\
\hline
\end{tabular}
\end{center}

\section{Steinberg algebras}\label{stein4}

\subsection{Steinberg algebras}

Steinberg algebras (for Hausdorff ample groupoids) are universal algebras that can be defined in terms of inverse semigroup algebras.  We present the details below and then provide a concrete realisation as a convolution algebra consisting of certain continuous functions with compact support. In the last section we describe the algebra when the groupoid is not Hausdorff.

Recall that if $R$ is a commutative ring with unit, then the \emph{semigroup algebra} $RS$ of an inverse semigroup $S$ is defined as the $R$-algebra with basis $S$ and multiplication extending that of $S$ via the distributive law. If $S$ is an inverse semigroup with zero element $z$, then the \emph{contracted} semigroup algebra is $R_0S = RS/Rz$.
For a $\Gamma$-graded groupoid $\mG$, recall the graded inverse semigroup $\mG^h$ from \S\ref{hausfgrtgeud7}. 

\begin{deff} \label{defsteinberg} 
Let $\mG$ be a $\Gamma$-graded Hausdorff ample groupoid  with the inverse semigroup $\mG^h$. Given a commutative ring $R$ with identity, the \emph{Steinberg $R$-algebra} associated to
$\mG$, and denoted $A_R(\mG)$, is the contracted semigroup algebra $R_0\mG^h$, modulo the ideal generated by $B + D - B \cup D$, where $B,D,B\cup D \in \mG^h_\g$, $\g \in \Gamma$ and $B\cap D = \emptyset$.  
\end{deff}
So the Steinberg algebra $A_R(\mG)$, is the algebra generated by the set $\{t_B \mid B \in \mG^h \}$ with coefficients in $R$, subject to the relations
\begin{enumerate}
\item[(R1)]\label{it1:unst} $t_{\emptyset}=0$;
\smallskip
\item[(R2)]\label{it2:unst} $t_{B}t_{D}=t_{BD}$, for all $B, D\in \mG^h$; and
\smallskip
\item[(R3)]\label{it3:unst} $t_{B}+t_{D}=t_{B\cup D}$, whenever $B$ and $D$ are disjoint elements of
    $\mG^h_{\gamma}$, $\gamma \in \Gamma$, such that $B\cup D$ is a bisection.
\end{enumerate}
Thus, the Steinberg algebra is universal with respect to the relations (R1), (R2) and (R3) in that if $A$ is any algebra containing  $\{T_B:B \in \mG^h\}$ satisfying  (R1), (R2) and (R3), then there is a homomorphism from $A_R(\mG)$ to $A$ that sends $t_B$ to $T_B$. The uniqueness theorems would tell us when this natural homomorphism is injective~(\S\ref{pofhjfyt84}).

\begin{example}[\bf{Classical groupoid algebras}]
If $\mG$ is a groupoid and $A$ is a ring then $A$ is said to be a $\mG$-graded ring if $A=\bigoplus_{\gamma \in \mG} A_{\gamma}$, where $A_{\gamma}$ is an additive subgroup of $A$ and $A_\beta A_\gamma \subseteq A_{\beta \gamma}$, if $(\beta, \gamma) \in \mG^{(2)}$ and $A_\beta A_\gamma=0$, otherwise. A prototype example of $\mG$-graded rings are classical groupoid algebras which we describe next. Let $R$ be a ring. Let $R\mG$ be a free left $R$-module with basis $\mG$, {\it i.e.,} 
$R\mG=\bigoplus_{\gamma \in \mG} R \gamma$, where $R \gamma=R$.  We define a multiplication as follows: 
\[\sum_{\sigma \in \mG} r_\sigma \sigma \, . \,  \sum_{\tau \in \mG} s_\tau  \tau=\sum_{\sigma, \tau \in \mG}   r_\sigma s_\tau \sigma \tau, \]
when $(\sigma, \tau) \in \mG^{(2)}$, and $0$ otherwise. This makes $R\mG$ an associative ring and setting $(R\mG)_\gamma=R\gamma$, clearly gives this ring a $\mG$-graded structure. 

It is not difficult to see that if $\mGo$ is finite, then $R\mG$ is a direct summand of matrix rings over corresponding isotropy group rings as follows:  Let $O_1, \cdots, O_k$ be the orbits of $\mGo$. Note that for $x,y \in O_i$, there is an isomorphism between the isotropy groups $\mG_x^x \cong \mG_y^y$. Choosing $x_i \in \O_i$, $1\leq i \leq k$, we then have 
\begin{equation}\label{camaug12}
R\mG \cong \bigoplus_{i=1}^{k} \M_{n_i}(R G_i),
\end{equation}
where $G_i=\mG_{x_i}^{x_i}$ and $n_i=|O_i|$.

Now if $\mG$ has a discrete topology, one can easily establish that $A_R(\mG)\cong R\mG$. On the other hand, for the case of \'etale groupoid, finiteness of $\mG^{(0)}$ implies $\mG$ is discrete (Lemma~\ref{fghfhfhuyt}). Putting these together  we have the following.

\begin{prop}\label{gfhgdhr22} \cite[Proposition~3.1]{steinbergchain}
Let $\mG$ be an ample groupoid with $\mG^{(0)}$ finite. Let $O_1, \cdots, O_k$ be the orbits of $\mGo$ and let $G_i$ be isotropy group of $O_i$ and $n_i=|O_i|$.  Then 
\begin{equation*}
A_R(\mG) \cong \bigoplus_{i=1}^{k} \M_{n_i}(R G_i),
\end{equation*}

\end{prop}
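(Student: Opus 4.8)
The plan is to chain together two isomorphisms, both of which are essentially recorded in the discussion preceding the proposition: first reduce the Steinberg algebra to the classical groupoid algebra $R\mG$ using that a finite unit space forces $\mG$ to be discrete, and then decompose $R\mG$ into a sum of matrix rings by splitting $\mG$ into its transitive pieces over the orbits of $\mGo$. In symbols, I would establish
\[
A_R(\mG) \cong R\mG \cong \bigoplus_{i=1}^k \M_{n_i}(RG_i),
\]
where the first isomorphism uses discreteness and the second is the decomposition~(\ref{camaug12}). Below I indicate how I would verify each half in case they are to be proved from scratch.

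For the first isomorphism I would invoke Lemma~\ref{fghfhfhuyt}: since $\mG$ is ample (hence \'etale) and $\mGo$ is finite, $\mG$ is a discrete topological space. Then every singleton $\{\gamma\}$ is a compact open bisection, and the compact open bisections are exactly the finite subsets $B\subseteq\mG$ on which $\dom$ and $\ran$ are injective. I would define $\varphi\colon R\mG\to A_R(\mG)$ on the basis by $\gamma\mapsto t_{\{\gamma\}}$; relations (R1) and (R2) show this is an algebra homomorphism, since $\{\gamma\}\{\delta\}=\{\gamma\delta\}$ when $(\gamma,\delta)\in\mG^{(2)}$ and is $\emptyset$ otherwise. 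For the inverse I would use the universal property: the elements $T_B:=\sum_{\gamma\in B}\gamma\in R\mG$ satisfy (R1), (R2) and (R3), because the bisection condition makes the products $\gamma\delta$ distinct so that $T_B T_D=T_{BD}$, while disjointness gives additivity; hence there is a homomorphism $A_R(\mG)\to R\mG$ sending $t_B\mapsto T_B$. Applying (R3) one element at a time, which is legitimate since every partial union stays a graded bisection, yields $t_B=\sum_{\gamma\in B}t_{\{\gamma\}}$, and this shows the two maps are mutually inverse.

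For the second isomorphism I would first observe that $\dom(\gamma)$ and $\ran(\gamma)$ always lie in the same orbit, so $\mG=\bigsqcup_{i=1}^k\mG|_{O_i}$ and morphisms from different blocks are never composable; hence $R\mG=\bigoplus_{i=1}^k R(\mG|_{O_i})$ as $R$-algebras. Each $\mG|_{O_i}$ is transitive on $O_i$, so by Example~\ref{exphyhyh} it is isomorphic as a groupoid to $n_i\times G_i\times n_i$, with $G_i$ its isotropy group and $n_i=|O_i|$; since a groupoid isomorphism induces an algebra isomorphism of groupoid algebras, it remains to identify $R(n\times G\times n)\cong\M_n(RG)$. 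I would do this with the map $(i,g,j)\mapsto g\,E_{ij}$ onto the matrix units, checking that $(i,g,j)(j,h,k)=(i,gh,k)\mapsto gh\,E_{ik}=(g E_{ij})(h E_{jk})$ and that non-composable products map to $0$ because $E_{ij}E_{lk}=0$ for $j\neq l$; this is a bijection on bases, hence an $R$-algebra isomorphism.

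I do not expect a genuine obstacle here, as each ingredient is routine; the two points that most need care are the injectivity in $A_R(\mG)\cong R\mG$, which I handle by explicitly constructing the inverse through the universal property rather than arguing by a basis count, and the claim that $G_i$ is well defined up to isomorphism independently of the chosen base point $x_i\in O_i$, which follows because conjugation by any connecting morphism gives an isomorphism $\mG_{x}^{x}\cong\mG_{y}^{y}$ for $x,y$ in the same orbit.
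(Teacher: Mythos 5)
Your proposal is correct and follows exactly the route the paper takes: Lemma~\ref{fghfhfhuyt} gives discreteness, whence $A_R(\mG)\cong R\mG$, and then the classical orbit decomposition \eqref{camaug12} of $R\mG$ into matrix rings over the isotropy group rings finishes the argument. You have simply filled in the details that the paper dismisses with ``one can easily establish'' and ``it is not difficult to see,'' and your verifications (the mutually inverse maps via the universal property, and the identification $R(n\times G\times n)\cong \M_n(RG)$ by matrix units) are sound.
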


Consider the set $I=\{1,\dots, n\}$, $n \in \mathbb N$ and $G=\{e\}$ a trivial group. Then the transitive groupoid $I\times I$ (see Example~\ref{exphyhyh}) with discrete topology is ample. Proposition~\ref{gfhgdhr22} now immediately gives
\[A_R(I\times I )\cong \M_n(R).\]

\end{example}

\begin{example}[{\bf Leavitt path algebras}]\label{lpaexam12}

Let $E$ be a graph. The Leavitt path algebra associated to the graph $E$ was introduced as a purely  algebraic version of graph $C^*$-algebras. We refer the reader to the book \cite{AAS} for a general introduction to the theory and \cite{rigby} for an excellent survey on the connection of these algebras with Steinberg algebras. We briefly give an account of how to model Leavitt path algebras as Steinberg algebras. 

For a graph $E$, let $\mG_E$ be the associated graph groupoid (see \S\ref{gfgfgfg1}).  By \cite[Example~3.2]{cs} 
the map
\begin{align}
\label{assteinberg}
\pi_{E} : L_{R}(E) & \longrightarrow A_{R}(\mG_{E}),\\
\mu\nu^{*}-\sum_{\a\in F}\mu\a\a^{*}\nu^{*} &\longrightarrow 1_{Z((\mu,\nu)\setminus F)} \notag
\end{align}
extends to a $\mathbb Z$-graded algebra isomorphism.  Observe that the
isomorphism of algebras in \eqref{assteinberg} satisfies
\begin{equation}
\label{valuation}
\pi_{E}(v)=1_{Z(v)}, \quad\pi_{E}(e)=1_{Z(e, r(e))}, \quad\pi_{E}(e^{*})=1_{Z(r(e), e)},
\end{equation}
for each $v\in E^{0}$ and $e\in E^{1}$.

If $w : E^1 \to \G$ is a function, we extend $w$ to $E^{*}$ by
defining $w(v) = 0$ for $v \in E^0$, and $w(\a_{1}\cdots \a_{n}) = w(\a_{1})\cdots
w(\a_{n})$. Thus $L_R(E)$ is a $\Gamma$-graded ring. On the other hand, defining $\widetilde{w}:\mG_{E}\xra \G$ by 
\begin{equation}\label{poiuytre}
    \widetilde{w}(\a x, |\a|-|\b|, \b x) = w(\a) w(\b)^{-1}, 
\end{equation}
gives a cocycle (\cite[Lemma 2.3]{kp}) and thus $A_R(\mG)$ is a $\Gamma$-graded ring as well. A quick inspection of isomorphism (\ref{assteinberg}) shows that $\pi_E$ respects the $\Gamma$-grading.

\end{example}

\subsection{Convolution algebra of continuous functions from $\mG$ to $R$}  \label{hf7hgj55}
In this subsection we give an alternative definition for Steinberg algebras. Let $A$ be the algebra $C(\mG)$ of continuous functions from $\mG$ to $R$ (where $R$ is viewed as a discrete space) which is locally constant and has compact support.  Here addition and scalar multiplication are defined pointwise and multiplication is given by \emph{convolution} where
\[f*g(\gamma) = \sum_{\alpha\beta=\gamma} f(\alpha)g(\beta).\]
Since $\mG$ is ample and Hausdorff, for each $B \in \mG^h$, $B$ is clopen so  the characteristic function $\chi_B$ (where $\chi_B(\gamma) = 1$ for $\gamma \in B$ and 0 otherwise) is continuous.  For $B,D \in \mG$, one can check that $\chi_B*\chi_D = \chi_{BD}$ and the set of characteristic functions
\[\{\chi_B : B \in \mG\} \subseteq C(\mG)\]
satisfies the relations (R1), (R2) and (R3).  So the universal property gives
us a homomorphism from $A_R(\mG)$ to $C(\mG)$ that takes $t_B$ to $\chi_B$ for $B \in \mG^h$. The range of this homomorphism is the subalgebra \[\operatorname{span} \{\chi_B : B \in \mG^h\} = \{f \in C(\mG) \mid f \text{ is locally constant and has compact support}\} .\]   Further this homomorphism is injective by \cite[Theorem~6.3]{st}  and hence an isomorphism.
Thus one can write 
\begin{align*} 
A_R(\mG )& = \text {span } \{1_B \mid  B \text{ is a homogeneous compact open bisection} \},\\
& = \Big \{\sum_{B\in F} r_B 1_B \mid  F: \text{ mutually disjoint finite collection of homogeneous compact open bisections} \Big \}, \\
& \text{ addition and scalar multiplication of functions are pointwise},\\
& \text{ multiplications on the generators are }  1_B 1_D = 1_{BD}.
\end{align*}

\subsection{Centre of a Steinberg algebra}

The centre of Steinberg algebras were determined in \cite{st} and it has a very pleasant description. We describe it here. 

We say $f \in A_R(\mG)$  is a \emph{class function} if $f$ satisfies the following conditions:
\begin{enumerate}
\item if $f(x) \not =0$ then $s(x)=r(x)$;

\medskip 

\item  if $s(x) = r(x) = s(z)$ then  $f(zxz^{-1}) = f(x)$.
\end{enumerate}

Here we consider $f\in C(\mG) $ as a continuous function on $\mG$ and $x \in \mG$. 

\begin{prop}\cite[Proposition 4.13]{st}
The centre of $A_R(\mG)$ is the set of class functions.
\end{prop}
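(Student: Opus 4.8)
The plan is to prove the two inclusions separately, working with the convolution realisation $A_R(\mG)=C_c(\mG)$ from \S\ref{hf7hgj55}, since the centre is most transparent in terms of the convolution product and pointwise values. Throughout I will use that $f\ast g(\gamma)=\sum_{\alpha\beta=\gamma}f(\alpha)g(\beta)$, and that every element is an $R$-linear combination of characteristic functions $1_B$ for compact open bisections $B$.

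\medskip

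\emph{Class functions are central.} First I would show that if $f$ is a class function then $f\ast g = g\ast f$ for every $g\in A_R(\mG)$. Fix $\gamma\in\mG$ and expand both sides pointwise. In $f\ast g(\gamma)=\sum_{\alpha\beta=\gamma}f(\alpha)g(\beta)$, condition (1) forces $f(\alpha)=0$ unless $s(\alpha)=r(\alpha)$, i.e.\ $\alpha$ is an isotropy element; the composability constraint $\alpha\beta=\gamma$ then pins down $\alpha$ and $\beta$ in terms of $\gamma$. The key manoeuvre is the change of variables that realises the defining relation $f(zxz^{-1})=f(x)$: writing the isotropy factor on the left of $\gamma$ versus on the right amounts to a conjugation by $\gamma$, and condition (2) guarantees the two sums agree term by term. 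Because $A_R(\mG)$ is spanned by characteristic functions it suffices to verify commutation against each $g=1_B$, which makes the sums finite and the bookkeeping concrete.

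\medskip

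\emph{Central elements are class functions.} For the converse, suppose $f\in A_R(\mG)$ is central. To get condition (1), I would test $f$ against characteristic functions of open subsets of the unit space: for $U\subseteq\mGo$ compact open, $1_U$ is supported on units, and comparing $f\ast 1_U$ with $1_U\ast f$ at a point $\gamma$ shows $(1_U\ast f)(\gamma)=f(\gamma)$ if $r(\gamma)\in U$ while $(f\ast 1_U)(\gamma)=f(\gamma)$ if $s(\gamma)\in U$; choosing $U$ to separate $s(\gamma)$ from $r(\gamma)$ (possible by Hausdorffness and ampleness of $\mGo$ when $s(\gamma)\neq r(\gamma)$) forces $f(\gamma)=0$, giving (1). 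For condition (2) I would test against $1_B$ for a compact open bisection $B$ containing a chosen $z$ with $s(z)=r(z)=s(x)$: evaluating the centrality identity $f\ast 1_B=1_B\ast f$ at the point $zxz^{-1}$ and unwinding the convolution (using that $B$ is a bisection so each sum has at most one nonzero term) yields exactly $f(zxz^{-1})=f(x)$.

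\medskip

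The main obstacle I anticipate is the careful indexing in the convolution sums: because compactly supported locally constant functions are finite combinations of $1_B$ but a single $f$ need not itself be a single $1_B$, one must argue that the pointwise identities hold for all $\gamma$ simultaneously, and that the local bisection structure makes each relevant sum collapse to one term. The cleanest route is to reduce everything to testing against $1_B$'s (legitimate since these span and centrality is preserved under linear combinations), so that at each stage the bisection property of $B$ turns the convolution into a single evaluation and the group-theoretic conjugation identity becomes visible. Verifying that the separating unit neighbourhood $U$ exists uses precisely that $\mGo$ is Hausdorff and has a basis of compact open sets, which is guaranteed by the standing ample Hausdorff hypothesis.
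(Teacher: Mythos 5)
The survey itself offers no proof of this proposition --- it is quoted from \cite[Proposition 4.13]{st} --- so there is no in-paper argument to compare against; your proposal is essentially the standard convolution computation that the cited proof carries out. The first inclusion and your derivation of condition (1) are sound: writing $f\ast g(\gamma)=\sum_{\alpha}f(\alpha)g(\alpha^{-1}\gamma)$ over $\alpha\in\operatorname{Iso}(\mG)\cap r^{-1}(r(\gamma))$ and $g\ast f(\gamma)=\sum_{\beta}g(\gamma\beta^{-1})f(\beta)$ over $\beta\in\operatorname{Iso}(\mG)\cap r^{-1}(s(\gamma))$, the substitution $\beta=\gamma^{-1}\alpha\gamma$ together with condition (2) (applied with $z=\gamma^{-1}$) matches the two sums term by term; and for condition (1), a compact open $U\subseteq\mGo$ containing $r(\gamma)$ but not $s(\gamma)$ exists by Hausdorffness, giving $1_U\ast f(\gamma)=f(\gamma)$ while $f\ast 1_U(\gamma)=0$.

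The one step that would fail as written is the choice of evaluation point for condition (2). Evaluating $f\ast 1_B=1_B\ast f$ at $zxz^{-1}$ does not produce $f(zxz^{-1})=f(x)$: the unique element of $B$ that can appear on the right of $zxz^{-1}$ must have source $s(zxz^{-1})=r(z)$, which need not be $z$ (and need not exist in $B$ at all), while unwinding the other side gives $f(xz^{-1})$ rather than $f(x)$. The correct point is $\gamma=zx$: since $r(zx)=r(z)$ and $s(zx)=s(x)=s(z)$, the bisection property forces $\alpha=z$ in $1_B\ast f(zx)=f(z^{-1}\cdot zx)=f(x)$ and $\beta=z$ in $f\ast 1_B(zx)=f(zx\cdot z^{-1})=f(zxz^{-1})$, so centrality yields $f(zxz^{-1})=f(x)$ as desired. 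With that substitution your argument is complete.
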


Note that if $f$ is a class function, then $\supp(f)\subseteq \Iso(\mG)^{\circ}$. Thus if  $\mG$  is effective, then the centre is contained in the diagonal subalgebra $A_R(\mG^{(0)})$. The diagonal preserving isomorphisms play an important role in realising groupoids from the algebra isomorphisms (see Theorem \ref{jdyr6bdfhe6e}). 

\subsection{Uniqueness theorems}\label{pofhjfyt84}

A \emph{uniqueness theorem} gives criteria under which a homomorphism from the Steinberg algebra to another $R$-algebra is injective.  Uniqueness theorems are useful when studying other concrete realisations of Steinberg algebras.  The most general uniqueness theorem is the following which is \cite[Theorem~3.1]{cep}:

\begin{thm}
\label{unthm}
Let $\mG$ be a second-countable, ample, Hausdorff groupoid and let $R$ be a unital commutative ring. Suppose that $A$ is an $R$-algebra and that $\pi: A_R(\mG) \to A$ is a ring homomorphism. Then $\pi$ is injective if and only if $\pi$ is injective on $A_R(\operatorname{Iso}(\mG)^o)$, the subalgebra generated by elements of $\mG^h$ that are also contained in the interior of the isotropy bundle.  
\end{thm}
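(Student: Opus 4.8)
\textbf{The plan is to} reduce the injectivity of a general homomorphism $\pi : A_R(\mG) \to A$ to its injectivity on the ``diagonal-isotropy'' subalgebra $A_R(\Iso(\mG)^\circ)$. The forward direction is immediate: if $\pi$ is injective on all of $A_R(\mG)$, then it is certainly injective on any subalgebra, in particular on $A_R(\Iso(\mG)^\circ)$. So the entire content is in the converse, and I would organise it around the support of elements of the kernel. First I would recall, from the convolution model in \S\ref{hf7hgj55}, that every $f \in A_R(\mG)$ is a locally constant, compactly supported function, and that $\supp(f)$ is a compact open subset of $\mG$ that can be written as a finite disjoint union of compact open bisections. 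The goal is to show that if $0 \neq f \in \Ker(\pi)$, then one can manufacture from $f$ a nonzero element supported inside $\Iso(\mG)^\circ$ that also lies in $\Ker(\pi)$, contradicting injectivity on $A_R(\Iso(\mG)^\circ)$.

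\textbf{The key mechanism} I would use is the standard ``multiply left and right by characteristic functions of bisections'' trick. Given $f \in \Ker(\pi)$ with a nonzero value $f(\gamma_0) \neq 0$ at some point $\gamma_0$, there are two cases. If $s(\gamma_0) = r(\gamma_0)$ for every point in some relevant part of the support, the element is already essentially isotropy-supported and we localise to a compact open bisection $B \ni \gamma_0$: replacing $f$ by $\chi_{B^{-1}} * f$ (equivalently $t_{B^{-1}}\,f$) translates the support so that the distinguished point lands on the unit space, concentrating the relevant value on $\Iso(\mG)^\circ$. Because $\pi$ is a homomorphism and $f \in \Ker(\pi)$, the translated element $t_{B^{-1}} f$ is still in $\Ker(\pi)$. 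If instead $s(\gamma_0) \neq r(\gamma_0)$, I would separate this ``off-diagonal'' point from the diagonal: using that $\mG$ is Hausdorff and ample, choose a compact open bisection $B$ around $\gamma_0$ small enough that $r(B)$ and $s(B)$ are disjoint compact open subsets of $\mGo$, and cut down $f$ by the characteristic function of $r(B)$ on one side; the resulting element is again in the kernel, but now I must argue that the off-diagonal contribution can be killed without losing everything.

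\textbf{The hard part will be} handling the off-diagonal support, and this is exactly where second countability enters. The subtlety is that an element of $A_R(\mG)$ supported entirely off the isotropy bundle need not be zero, so one cannot simply discard off-diagonal pieces; one must show that the off-diagonal part of $\Ker(\pi)$ cannot ``hide'' a nonzero element invisible on $A_R(\Iso(\mG)^\circ)$. The standard route (following \cite{cep} and the analogous $C^*$-arguments) is a covering/Baire-category argument: second countability lets one write the relevant off-diagonal open set as a countable union of compact open bisections and extract a point $x \in \mGo$ whose fibre is controlled, so that evaluation-type functionals or a careful restriction to $\mG_x$ detect the kernel element. Concretely, I expect the argument to localise $f$ near a unit $u$ where $f$ is nonzero, restrict attention to the bisections meeting the isotropy over $u$, and use the interior condition $\Iso(\mG)^\circ$ to absorb precisely the points with $s = r$ that survive in the interior, while second countability ensures the boundary isotropy (the part of $\Iso(\mG)$ not in its interior) is topologically negligible enough to be separated off by a compact open bisection.

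\textbf{In summary}, I would: (i) dispatch the forward direction trivially; (ii) assume a nonzero kernel element $f$ and pass to the convolution picture; (iii) translate and cut $f$ by characteristic functions of compact open bisections to concentrate a nonzero value on the unit space, producing a kernel element supported in a controlled neighbourhood; (iv) invoke second countability and the effective-free structure of the interior of the isotropy bundle to separate the genuine isotropy-interior contribution from the off-diagonal remainder; and (v) conclude that $\pi$ restricted to $A_R(\Iso(\mG)^\circ)$ already sees a nonzero element of $\Ker(\pi)$, contradicting the hypothesis. I expect step (iv)---the separation of off-diagonal support using second countability---to be the genuine obstacle, since it is the step that fails for non-effective groupoids and is the reason the theorem is phrased in terms of $\Iso(\mG)^\circ$ rather than $\mGo$.
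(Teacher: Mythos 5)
Your forward direction and overall architecture are reasonable, and they do match the strategy of the result this survey is quoting: note that the paper itself does not prove Theorem~\ref{unthm} but cites it as \cite[Theorem~3.1]{cep}, remarking only that second countability enters because the proof needs the unit space to be a Baire space. Measured against that, your outline is an accurate roadmap but not a proof, because the one step that carries all the mathematical content is precisely the step you defer. Concretely: since $\Ker(\pi)$ is a two-sided ideal, the theorem reduces to the purely groupoid-theoretic lemma that for every nonzero $f\in A_R(\mG)$ there exist a compact open bisection $B$ and a compact open $K\subseteq\mGo$ with $\chi_K*\chi_{B^{-1}}*f*\chi_K$ a nonzero element supported in $\Iso(\mG)^{\circ}$. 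Your steps (ii)--(iii) correctly produce $g=\chi_{B^{-1}}*f$ with $g(u)=f(\gamma_0)\neq 0$ for $u=s(\gamma_0)\in\mGo\subseteq\Iso(\mG)^{\circ}$, but your step (iv) --- finding $K\ni u$ so that cutting down by $\chi_K$ on both sides kills every point of $\supp(g)$ outside $\Iso(\mG)^{\circ}$ without killing the value at $u$ --- is asserted rather than carried out. The mechanism that makes it work is that for a compact open bisection $D$ with $D\cap\Iso(\mG)^{\circ}=\emptyset$, the set $s(D\cap\Iso(\mG))$ has empty interior in $s(D)$ (an interior point of $D\cap\Iso(\mG)$ would lie in $\Iso(\mG)^{\circ}$), so the units over which $D$ meets the isotropy form a closed nowhere dense set; one must then choose $u$ avoiding all the relevant sets simultaneously, and this is exactly where the Baire property of $\mGo$, hence the second-countability hypothesis supplying a countable family to intersect, is used. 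Without this, the claim that the off-diagonal part is ``topologically negligible enough to be separated off'' is the theorem restated, not proved.

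Two smaller points. First, your case split on whether $s(\gamma_0)=r(\gamma_0)$ is unnecessary and slightly misleading: the translation $f\mapsto\chi_{B^{-1}}*f$ lands the distinguished point on the unit space regardless of whether $\gamma_0$ is an isotropy element, so a single uniform reduction (arrange that $f$ is nonzero somewhere on $\mGo$, then cut down) is cleaner and is what \cite{cep} does; your dichotomy does not correspond to two genuinely different arguments. Second, make explicit that $\chi_K*g*\chi_K\in\Ker(\pi)$ because $\Ker(\pi)$ is an ideal --- you gesture at this, but it is the reason the reduction to the support lemma is legitimate.
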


Theorem~\ref{unthm} has the assumption of second-countability because the proof requires the unit space to be a `Baire space'.  The graded uniqueness theorem, below (which is \cite[Theorem~3.4]{clarkmichell} ) does not have this assumption.  Instead it requires a particular graded structure.  

\begin{thm}
\label{grunthm}
Let $\mG$ be a Hausdorff, ample groupoid, $R$ a commutative ring with identity, $\Gamma$ a discrete group, and 
$c : \mG \to \Gamma$ a continuous functor such that $\mG_e$ is effective. 
Suppose  $\pi : A_R(\mG) \to A$ is a graded ring homomorphism. 
Then $\pi$ is injective if and only if $\pi(rt_K) \neq 0$ for every nonzero $r \in R$ and compact open $K \subseteq \mG^{(0)}$.
\end{thm}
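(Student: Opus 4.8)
The plan is to prove the graded uniqueness theorem (Theorem~\ref{grunthm}) by reducing the injectivity of the graded homomorphism $\pi$ to its behaviour on the unit space $\mGo$. The forward direction is immediate: if $\pi$ is injective then in particular $\pi(r t_K) \neq 0$ for nonzero $r$ and nonempty compact open $K \subseteq \mGo$, since $r t_K = r \chi_K$ is itself a nonzero element of $A_R(\mG)$. The substance is the converse, so I assume the stated condition on the unit space and aim to show $\ker \pi = 0$.

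First I would take an arbitrary nonzero homogeneous-supported element $a \in \ker \pi$ and write it in the normal form supplied by \S\ref{hf7hgj55}, namely $a = \sum_{B \in F} r_B\, 1_B$ for a finite mutually disjoint collection $F$ of homogeneous compact open bisections with all $r_B \neq 0$. The key idea is to use the grading to isolate a single graded component. Because $\pi$ is a \emph{graded} homomorphism and each $1_B$ lives in the degree-$c(B)$ component, if $a \in \ker \pi$ then each graded piece $\sum_{B \in F,\, c(B)=\g} r_B 1_B$ also lies in $\ker \pi$; so I may assume from the outset that $a$ is homogeneous, say $a \in A_R(\mG)_\g$ with $c(B) = \g$ for all $B \in F$. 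The goal is then to manufacture, from such an $a$, a nonzero element supported on $\mGo$ that still lies in $\ker \pi$, contradicting the hypothesis.

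The central manoeuvre is to multiply $a$ by a suitable element to land in the $\varepsilon$-component and exploit effectiveness of $\mG_\varepsilon$. Fix one bisection $B_0 \in F$ and consider $a^* a$ or, more cleanly, $1_{B_0^{-1}} a = \sum_{B \in F} r_B 1_{B_0^{-1} B}$, which is now supported in $\mG_\varepsilon$ and whose term coming from $B = B_0$ is $r_{B_0} 1_{B_0^{-1} B_0}$, a nonzero multiple of a characteristic function of a compact open subset of $\mGo$. The difficulty is that the other terms $1_{B_0^{-1} B}$ for $B \neq B_0$ need not be supported on $\mGo$ — they are bisections inside the isotropy-free part of $\mG_\varepsilon$. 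Here is where effectiveness of $\mG_\varepsilon$ does the work: since $\mG_\varepsilon$ is effective, any compact open bisection in $\mG_\varepsilon$ meeting the complement of $\mGo$ contains an element $\gamma$ with $s(\gamma) \neq r(\gamma)$, and one can shrink the unit-space ``diagonal'' term by restricting to a compact open subset $K$ of $\mGo$ on which all the off-diagonal bisections vanish while the diagonal term survives. Concretely, I would choose a nonempty compact open $K \subseteq B_0^{-1}B_0 \subseteq \mGo$ disjoint from the sources of all the off-diagonal pieces, so that $1_K (1_{B_0^{-1}} a) 1_K = r_{B_0} 1_K$; this element is in $\ker \pi$, giving $\pi(r_{B_0} t_K) = 0$ with $r_{B_0} \neq 0$ and $K$ nonempty compact open in $\mGo$, the desired contradiction.

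The main obstacle I anticipate is the bookkeeping in the preceding restriction step: guaranteeing that a single nonempty compact open $K \subseteq \mGo$ can simultaneously kill every off-diagonal term while preserving the diagonal one. This is exactly where effectiveness is indispensable and where the argument is most delicate, because the off-diagonal bisections $B_0^{-1}B$ form a finite family whose intersections with $\mGo$ must be separated from a chosen point by an open set; effectiveness ensures the interior of each such bisection meets $\mGo$ in a set whose complement (within $B_0^{-1}B_0$) is still large enough to contain a compact open piece. I would handle this by an inductive shrinking over the finitely many off-diagonal terms, at each stage using that $\Iso(\mG_\varepsilon)^\circ = \mGo$ to find, inside the current compact open neighbourhood, a smaller compact open subset of $\mGo$ avoiding the next term's support — the Hausdorff ample hypothesis guaranteeing that such compact open subsets exist at every stage.
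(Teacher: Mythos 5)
The paper does not actually prove Theorem~\ref{grunthm}; it only cites \cite[Theorem~3.4]{clarkmichell}. Your argument is, in outline, exactly the proof given in that reference: use gradedness of $\pi$ to reduce to a homogeneous element $a=\sum_{B\in F}r_B 1_B$ of $\ker\pi$, multiply by $1_{B_0^{-1}}$ to land in the $\varepsilon$-component with the diagonal term $r_{B_0}1_{B_0^{-1}B_0}$ surviving (and, as you implicitly use, $B_0^{-1}B\cap\mGo=\emptyset$ for $B\neq B_0$ because the $B$'s are disjoint), then use effectiveness of $\mG_\varepsilon$ to cut down to a compact open $K\subseteq\mGo$ with $1_K(1_{B_0^{-1}}a)1_K=r_{B_0}1_K\in\ker\pi$, contradicting the hypothesis. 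One step is misstated, though your closing paragraph essentially repairs it: you cannot in general choose $K$ \emph{disjoint from the sources} of the off-diagonal bisections $D=B_0^{-1}B$, since $s(D)$ may cover all of $B_0^{-1}B_0$. The condition you actually need is the weaker $KDK=\emptyset$, and this is what effectiveness delivers: if $K'DK'\neq\emptyset$ it is an open bisection disjoint from $\mGo$, hence not contained in $\Iso(\mG_\varepsilon)$, so it contains some $\gamma$ with $s(\gamma)\neq r(\gamma)$; separating $s(\gamma)$ from $r(\gamma)$ by a compact open $K''\subseteq K'$ with $r(K''DK'')\cap K''=\emptyset$ kills that term, and the finite induction over $F$ finishes the argument. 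With that correction the proposal is a faithful reconstruction of the cited proof.
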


\subsection{Ideal structures of Steinberg algebras} \label{dhfdujjdefjde3}

There is a satisfactory description of ideals of a Steinberg algebra $A_k(\mG)$ based on the geometry of the groupoid $\mG$, where the algebra is over a field $k$ (so the ideals of the coefficient ring does not interfere) and the groupoid is Hausdorff ample and (strongly) effective.

The first result is the simplicity of these algebras. 

\begin{thm}\cite{bcfs}\label{jhsd76hbdfh}
Let $\mG$ be an Hausdorff, ample groupoid, and $k$ a field. Then $A_k(\mG)$ is simple if and only if $\mG$ is effective and 
$\mG^{(0)}$ has no open invariant subsets.
\end{thm}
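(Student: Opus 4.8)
The plan is to work throughout with the convolution realisation of $A_k(\mG)$ from \S\ref{hf7hgj55}, so that a typical element is a finite $k$-combination $\sum_B r_B 1_B$ of characteristic functions of compact open bisections with $1_B * 1_D = 1_{BD}$, and to prove the two implications separately (reading ``no open invariant subsets'' as ``no nonempty proper open invariant subset'', i.e. $\mG$ minimal). For sufficiency I would not re-derive any reduction lemma by hand but instead invoke Theorem~\ref{grunthm} with the \emph{trivial} grading: viewing $\mG$ as $\{\varepsilon\}$-graded, the constant cocycle is continuous, $\mG_\varepsilon = \mG$ is effective, and every ring homomorphism is automatically graded. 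So, given a nonzero ideal $I$, if $I$ were proper the quotient map $\pi\colon A_k(\mG)\to A_k(\mG)/I$ would be a nonzero, non-injective graded homomorphism, and Theorem~\ref{grunthm} would then produce a nonzero $r\in k$ and a compact open $K\subseteq \mGo$ with $\pi(r1_K)=0$; since $k$ is a field this yields $1_K\in I$ with $K\neq\emptyset$.

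The next step is to bootstrap this one diagonal projection to all of $A_k(\mG)$ using minimality. I would set $W=\bigcup\{V\subseteq\mGo \text{ compact open}:1_V\in I\}$, a nonempty open subset of $\mGo$, and show it is invariant: if $u\in V$ with $1_V\in I$ and $\dom(\gamma)=u$, pick a compact open bisection $C\ni\gamma$; then $1_C*1_V*1_{C^{-1}}=1_{V'}\in I$ with $V'=\ran\big(C\cap\dom^{-1}(V)\big)\ni\ran(\gamma)$ compact open in $\mGo$, so $\ran(\gamma)\in W$. Minimality forces $W=\mGo$, and a compactness argument (covering a compact open $L\subseteq\mGo$ by finitely many such $V$ and using $1_{V_1\cap V_2}=1_{V_1}*1_{V_2}$ together with inclusion--exclusion) gives $1_L\in I$ for every compact open $L\subseteq\mGo$. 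Finally $1_B=1_B*1_{B^{-1}B}$ with $B^{-1}B\subseteq\mGo$, so $1_B\in I$ for every compact open bisection $B$, whence $I=A_k(\mG)$.

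For necessity I argue by contraposition in two cases. If $\mG$ is not minimal, fix a nonempty proper open invariant $U\subseteq\mGo$; then $\{f\in A_k(\mG):\supp(f)\subseteq\mG_U\}$, where $\mG_U=\dom^{-1}(U)=\mG|_U$, is a two-sided ideal (invariance gives $\dom(x)\in U\iff\ran(x)\in U$, and a short support computation shows this subspace absorbs convolution on both sides), which is nonzero via $1_K$ for $\emptyset\neq K\subseteq U$ and proper since $1_V\notin$ it for a compact open neighbourhood $V$ of any $x\in\mGo\setminus U$. If instead $\mG$ is not effective, then, $\mGo$ being open and (by Hausdorffness) closed, the set $\Iso(\mG)^{\circ}\setminus\mGo$ is nonempty and open, so I can choose a compact open bisection $B\subseteq\Iso(\mG)^{\circ}$ with $B\cap\mGo=\emptyset$; as $B$ lies in the isotropy bundle, $\dom(\gamma)=\ran(\gamma)$ on $B$ and $\dom(B)=\ran(B)=:U\subseteq\mGo$. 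I then consider $a:=1_B-1_U\neq0$ and prove the ideal $\langle a\rangle$ is proper by exhibiting a nonzero representation $\sigma$ on $A_k(\mGo)$ that annihilates it, namely
\[
(\sigma(f)\xi)(u)=\sum_{\ran(\gamma)=u} f(\gamma)\,\xi(\dom(\gamma)).
\]
A direct check shows $\sigma$ is an algebra homomorphism, that $\sigma(1_W)$ is multiplication by $1_W$ for $W\subseteq\mGo$ (so $\sigma\neq0$), and that, because each $\gamma\in B$ satisfies $\dom(\gamma)=\ran(\gamma)$, one has $\sigma(1_B)=\sigma(1_U)$; hence $\sigma(a)=0$ and $\langle a\rangle\subseteq\ker\sigma\subsetneq A_k(\mG)$ is proper and nonzero, so $A_k(\mG)$ is not simple.

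The genuine difficulty of the sufficiency direction — that effectiveness forces any nonzero ideal to be detected on the diagonal $A_k(\mGo)$ — is precisely the content packaged by the graded uniqueness theorem, so the cleanest route is to invoke Theorem~\ref{grunthm} with the trivial grading rather than reprove the reduction-to-the-diagonal lemma. Consequently the main obstacle among the steps I carry out explicitly is the non-effective case of necessity: the delicate point is finding the representation $\sigma$ that collapses the isotropy bisection $B$ onto its unit set $U$ (so it cannot separate $1_B$ from $1_U$) while remaining nonzero, and then verifying both its multiplicativity and the identity $\sigma(1_B)=\sigma(1_U)$.
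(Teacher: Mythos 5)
Your argument is correct, and it is worth noting that the survey itself offers no proof of Theorem~\ref{jhsd76hbdfh} --- it is quoted from \cite{bcfs} --- so the comparison is with the original argument there rather than with anything in this text. The substantive difference is in the sufficiency direction: \cite{bcfs} proves the reduction to the diagonal by hand (a cut-down lemma showing that in an effective Hausdorff ample groupoid every nonzero ideal meets $A_k(\mGo)$ in a nonzero element, obtained by shrinking supports with compact open bisections), whereas you outsource exactly that step to the graded uniqueness theorem (Theorem~\ref{grunthm}) applied with the trivial grading, for which $\mG_\varepsilon=\mG$ is effective and every ring homomorphism is graded. This is legitimate and is the cleaner route inside this survey, since Theorem~\ref{grunthm} has no second-countability hypothesis (unlike Theorem~\ref{unthm}); the cost is that the simplicity theorem now rests on \cite{clarkmichell}, whose proof is essentially the same cut-down lemma, so nothing is gained logically --- only in exposition. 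Your bootstrapping from one diagonal projection $1_K\in I$ to $I=A_k(\mG)$ via the invariant open set $W$ and the identity $1_C*1_V*1_{C^{-1}}=1_{\ran(C\cap\dom^{-1}(V))}$ is the standard minimality argument and is carried out correctly, as is the verification that $I(U)$ is a proper nonzero ideal when $U$ is a nontrivial open invariant set. For the non-effective case, your representation $\sigma$ on functions over $\mGo$ is well defined (the sums are finite because $\ran$ is a local homeomorphism and supports are compact), multiplicative, nonzero on the diagonal, and collapses $1_B$ onto $1_U$ for a compact open bisection $B\subseteq\Iso(\mG)^{\circ}\setminus\mGo$ --- the existence of which correctly uses that $\mGo$ is clopen in a Hausdorff ample groupoid; this gives the required nonzero proper ideal $\ker\sigma$. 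Your reading of ``no open invariant subsets'' as ``no nonempty proper open invariant subsets'' is the intended one (the statement is vacuous otherwise), and the whole argument goes through without compactness of $\mGo$ or second countability, matching the generality of the cited theorem.
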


A glance at the table of properties of the graph versus the graph groupoids (\S\ref{gfgfgfg1}) shows that Theorem~\ref{jhsd76hbdfh} is parallel to the first theorem proved in the theory of Leavitt path algebras, namely,  
for an arbitrary graph $E$, the Leavitt path algebra $L_k(E)$ is simple if and only if $E$ satisfies condition (L) and $E^0$ has no nontrivial saturated hereditary subsets~(\cite{abrams2005}, \cite[\S2.9]{AAS}). 

For an invariant $U\subseteq \mG^{(0)}$, one can easily see that the set 
 \[I(U) := \text{span} \{1_B \mid  s(B) \subseteq  U\},\]
is an ideal of $A_k(\mG)$. In fact, if the groupoid is strongly effective, this is the only way one can construct ideals in these algebras. 

\begin{thm} \cite{chuef} 
Suppose $\mG$ is a strongly effective ample groupoid. Then the correspondence
\[U \longmapsto I(U),\]
is a lattice isomorphism from the lattice of open invariant subsets of $\mG^{(0)}$ onto the lattice of ideals of $A_k(\mG)$.
\end{thm}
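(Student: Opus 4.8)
The plan is to exhibit an explicit inverse to $U\mapsto I(U)$ and check that both maps preserve order; since a bijective order isomorphism between lattices automatically preserves meets and joins, this upgrades to a lattice isomorphism. For an ideal $J$ of $A_k(\mG)$ I would set
\[U_J=\bigcup\{V\sub\mGo : V\text{ compact open and }1_V\in J\},\]
which is open (a union of opens) and invariant: for a compact open bisection $B$ and compact open $V\sub\mGo$ one computes $1_B1_V1_{B^{-1}}=1_{r(B\cap s^{-1}(V))}$, so conjugating a generator $1_V\in J$ by bisections keeps its range inside $U_J$. The theorem then reduces to the two identities $U_{I(U)}=U$ and $I(U_J)=J$.

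The backbone is the restriction short exact sequence. For open invariant $U$ put $D=\mGo\setminus U$ (closed, invariant); then $\mG=\mG_U\sqcup\mG_D$ with $\mG_U=s^{-1}(U)$, $\mG_D=s^{-1}(D)$, and restriction of functions gives a surjective algebra homomorphism $\pi_D\colon A_k(\mG)\to A_k(\mG_D)$, $f\mapsto f|_{\mG_D}$, whose kernel is exactly $I(U)$ (a support argument: since $\mG$ is ample, any $f$ supported in the open set $\mG_U$ is an $R$-combination of $1_B$ with $B\sub\mG_U$). As $\pi_D(1_V)=1_{V\cap D}$, the identity $U_{I(U)}=U$ is immediate — $1_V\in I(U)=\ker\pi_D$ forces $V\cap D=\emptyset$, i.e. $V\sub U$, and the reverse inclusion uses ampleness to find compact open $V\ni u$ inside $U$. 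This already yields injectivity and order preservation of both maps. The inclusion $I(U_J)\sub J$ is routine as well: if $s(B)\sub U_J$ then by compactness $s(B)$ lies in a finite union $W$ of compact open sets with $1_W\in J$, and $1_B=1_B1_W\in J$.

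The real content, and where I expect the main obstacle, is the reverse inclusion $J\sub I(U_J)$, equivalently $\pi_D(J)=0$ for $D=\mGo\setminus U_J$; this is exactly where strong effectiveness is indispensable. Suppose $\pi_D(J)\ne0$. Since $D$ is closed and invariant, $\mG_D$ is effective, so by the concentration lemma underlying the simplicity theorem (Theorem~\ref{jhsd76hbdfh}, \cite{bcfs}) the nonzero ideal $\pi_D(J)$ contains $1_{V'}$ for some nonempty compact open $V'\sub D$; choose $g\in J$ with $\pi_D(g)=1_{V'}$. The aim is to produce from $g$ a genuine unit-space idempotent $1_{V_1}\in J$ meeting $D$, which would place a point of $D$ into $U_J$ and contradict $D=\mGo\setminus U_J$. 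The difficulty is that the naive diagonal part $g|_{\mGo}$ need not lie in $J$, so one must compress $g$ using effectiveness rather than take diagonals. Picking compact open $\hat V\sub\mGo$ with $\hat V\cap D=V'$ and replacing $g$ by $1_{\hat V}g1_{\hat V}\in J\cap A_k(\mG|_{\hat V})$, its diagonal part equals $1$ on a compact open $V_0\ni u$ for any $u\in V'$, whence $1_{V_0}g1_{V_0}=1_{V_0}+b$ with $b$ supported on finitely many off-diagonal bisections $C_j$.

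The key geometric observation is that $\mG|_{V'}$ is itself effective: restriction of an effective groupoid to an open subset of its unit space preserves effectiveness (from $\operatorname{int}(X\cap O)=\operatorname{int}(X)\cap O$ for open $O$), and $V'$ is open in $D$ with $\mG_D$ effective. Hence over the nonempty open set $V'\cap V_0$ the isotropy parts of the off-diagonal $C_j$ have empty interior, so there is a good unit $u_1\in V'\cap V_0$ at which no $C_j$ carries an isotropy element. Shrinking to a small compact open $V_1\ni u_1$ inside $V_0$, using Hausdorffness to push the ranges of the $C_j$-elements off $V_1$, annihilates every off-diagonal term, giving $1_{V_1}(1_{V_0}+b)1_{V_1}=1_{V_1}\in J$ with $u_1\in V_1\cap D$ — the desired contradiction. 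Thus $\pi_D(J)=0$, so $J=I(U_J)$, and the correspondence is an order isomorphism, hence a lattice isomorphism. I would close by noting that strong effectiveness, rather than mere effectiveness, is exactly what is needed here, since as $J$ varies the set $D=\mGo\setminus U_J$ ranges over all closed invariant subsets of $\mGo$, and each restriction $\mG_D$ must be effective for the argument to run.
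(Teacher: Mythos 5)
The paper states this theorem without proof, merely citing \cite{chuef}, and your argument reconstructs essentially the proof of that reference: the inverse correspondence $J\mapsto U_J$, the identification of $I(U)$ as the kernel of the restriction map $A_k(\mG)\to A_k(\mG_D)$ for $D=\mGo\setminus U$, and the use of effectiveness of $\mG_D$ (via the cutting-down lemma behind Theorem~\ref{jhsd76hbdfh} from \cite{bcfs}, followed by compression to a unit-space idempotent) to force $J=I(U_J)$. The one step you compress most --- finding a unit in $V'\cap V_0$ at which no off-diagonal bisection $C_j$ carries isotropy --- does go through, by iteratively shrinking $V'\cap V_0$ inside $D$ (each ``bad'' set is relatively closed with empty interior, since a relatively open set of isotropy in $\mG_D$ off the unit space would contradict effectiveness of $\mG_D$), so I see no gap.
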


\begin{thm}\cite{cep}
Let $\mG$ be a $\Gamma$-graded ample groupoid such that $\mG_\varepsilon$  is strongly effective. Then the correspondence
\[U \longmapsto I(U),\]
is an isomorphism from the lattice of open invariant subsets of $\mG^{(0)}$ onto to the lattice of graded ideals in $A_k(G)$.
\end{thm}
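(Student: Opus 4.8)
The plan is to produce an explicit two–sided inverse to $U\mapsto I(U)$ and then reduce the single nontrivial point, surjectivity onto \emph{graded} ideals, to the graded uniqueness theorem (Theorem~\ref{grunthm}). Throughout fix the field $k$, and for $D\subseteq\mGo$ write $\mG_D=\{x\in\mG:d(x)\in D\}$ (and similarly $\mG_U$). First I would record the routine structural facts. For open invariant $U$, two–sided invariance gives $d(x)\in U\iff r(x)\in U$, so $I(U)=\operatorname{span}\{1_B:B\in\mG^h,\ s(B)\subseteq U\}$ is spanned by homogeneous elements and is therefore a graded subspace; the same invariance shows $1_B1_C=1_{BC}$ and $1_C1_B=1_{CB}$ remain in $I(U)$ whenever $s(B)\subseteq U$, so $I(U)$ is a graded ideal, and plainly $U_1\subseteq U_2\Rightarrow I(U_1)\subseteq I(U_2)$. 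For a graded ideal $J$ I set
\[U_J=\bigcup\{V\subseteq\mGo:\ V\text{ compact open},\ 1_V\in J\}.\]
Then $U_J$ is open, and invariant: if $1_V\in J$ and $s(B)\subseteq V$ then $1_B=1_B1_V\in J$, whence $1_{r(B)}=1_B1_{B^{-1}}\in J$ and $r(B)\subseteq U_J$. One checks $U_{I(U)}=U$ and $J_1\subseteq J_2\Rightarrow U_{J_1}\subseteq U_{J_2}$, so $U\mapsto I(U)$ is an order embedding; once I prove $I(U_J)=J$ it will be an order isomorphism of posets, hence a lattice isomorphism.

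Next I would dispose of the easy inclusion $I(U_J)\subseteq J$ by compactness: if $s(B)\subseteq U_J$, then covering the compact set $s(B)$ by finitely many $V$ with $1_V\in J$ and disjointifying yields $1_{s(B)}\in J$, so $1_B=1_B1_{s(B)}\in J$. For the reverse inclusion I put $D=\mGo\setminus U_J$, a closed $\mG$–invariant (a fortiori $\mG_\varepsilon$–invariant) set, and consider restriction of functions $\pi_D:A_k(\mG)\to A_k(\mG_D)$. Since $D$ is invariant, any factorisation $x=\alpha\beta$ of an element of $\mG_D$ has $\alpha,\beta\in\mG_D$, so $\pi_D$ is multiplicative; it is a surjective graded homomorphism (the cocycle restricts) whose kernel is exactly the functions supported off $\mG_D$, i.e.\ $\ker\pi_D=I(U_J)$. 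Thus $J\subseteq I(U_J)$ is equivalent to $\pi_D(J)=0$. This is the one place the hypothesis enters: because $\mG_\varepsilon$ is strongly effective and $D$ is closed invariant, $(\mG_D)_\varepsilon=(\mG_\varepsilon)_D$ is effective, and $\mG_D$ is again Hausdorff ample and $\Gamma$–graded.

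Now suppose, for contradiction, $\pi_D(J)\neq 0$. Applying Theorem~\ref{grunthm} to the graded quotient map $q:A_k(\mG_D)\to A_k(\mG_D)/\pi_D(J)$—legitimate since $(\mG_D)_\varepsilon$ is effective—the map $q$ is not injective, so the theorem forces $q(1_K)=0$, i.e.\ $1_K\in\pi_D(J)$, for some nonempty compact open $K\subseteq D$. I lift to $g\in J$ with $\pi_D(g)=1_K$, and (using that $\mGo$ is ample and $D$ closed) choose a compact open $V\subseteq\mGo$ with $V\cap D=K$. Replacing $g$ by $1_Vg\,1_V\in J$, I may assume $g$ is supported in $\{x\in\mG:r(x),d(x)\in V\}$ with $\pi_D(g)=1_K$ still. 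This last equality says $g$ vanishes on every off–diagonal arrow of $\mG_D$, so the off–diagonal part $g-g|_{\mGo}$ is supported in $\mG\setminus\mG_D=\mG_{U_J}$ and hence lies in $I(U_J)\subseteq J$. Therefore $g|_{\mGo}\in J$; it is a $k$–combination of characteristic functions of disjoint compact open subsets of $\mGo$ and equals $1$ on $K$. Fixing $x_0\in K$ and compressing by the summand $V_0\ni x_0$ on which it is nonzero gives $1_{V_0}\in J$ with $V_0$ compact open in $\mGo$ and $x_0\in V_0\cap D$. But $1_{V_0}\in J$ forces $V_0\subseteq U_J$, contradicting $x_0\in D=\mGo\setminus U_J$. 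Hence $\pi_D(J)=0$, so $J\subseteq I(U_J)$ and $J=I(U_J)$, completing surjectivity and the lattice isomorphism.

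I expect the main obstacle to be the final contradiction step, and inside it the reduction to the diagonal: all of its force is packaged into Theorem~\ref{grunthm}, whose hypotheses are met precisely because $\mG_\varepsilon$ is strongly effective (so that $(\mG_D)_\varepsilon$ is effective for the \emph{arbitrary} closed invariant $D=\mGo\setminus U_J$). The remaining point requiring care is the identification $\ker\pi_D=I(U_J)$, which depends on two–sided invariance of $D$ to make $\mG_D$ closed under factorisation, so that $\pi_D$ is genuinely an algebra homomorphism and $\pi_D(J)$ a graded ideal of $A_k(\mG_D)$.
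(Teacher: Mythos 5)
The paper states this theorem without proof, citing \cite{cep}; your argument is correct and reconstructs exactly the route that source takes, namely restricting to the closed invariant complement $D=\mGo\setminus U_J$ and invoking the graded uniqueness theorem (Theorem~\ref{grunthm}) for $\mG_D$, which is applicable precisely because strong effectiveness of $\mG_\varepsilon$ makes $(\mG_D)_\varepsilon$ effective. The only cosmetic points are that Theorem~\ref{grunthm} gives you $r\,t_K\in\pi_D(J)$ for some nonzero $r$ (harmless since $k$ is a field) and that the trivial case $D=\emptyset$ should be set aside before citing strong effectiveness; neither affects the proof.
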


We refer the reader to \cite{chuef,cep} for further results on the ideal theory of Steinberg algebras. 

\section{Combinatorial and dynamical invariants of \'etale groupoids}

There are several ``combinatorial'' invariants one can associate to a groupoid such as the full group and homology groups. For certain groupoids these combinatorial invariants  are related to  very interesting Higman-Thompson groups or $K$-groups as we describe next in this section. 

\subsection{Full groups}
For an \'etale groupoid $\mG$ with the compact unit space $\mGo$, the full group $\FG$ was defined by Matui~\cite{matui2012}. The full group 
$[[\mathbb Z \times X]]$ of the transformation groupoid of the action of $\mathbb Z$ on a Cantor set $X$ via a minimal homeomorphism  (see Example~\ref{exphyhyh}) coincides with the full group defined and studied in \cite{giordano}. We define the full group of a groupoid here and collect results related to this group. 

Recall that $\mG^a$ is the inverse semigroup of compact open bisections and $\pi: \mG^a \rightarrow \mathcal I(\mGo)$ the action of $\mG^a$ on $\mGo$ (\S\ref{hausfgrtgeud7}). Let $G:=\{U \in \mG^a \mid \dom(U)=\ran(U)=\mGo \}$ be a subgroup of $\mG^a$ (here $\mGo$ considered to be compact). Then the \emph{full group of $\mG$}, denoted by $\FG$ is $\pi(G)$. In fact for a noncompact $\mGo$, one can give a generalised version of this notion and define the \emph{full inverse semigroup} by 
$\FG=\pi(\mG^a)$. 

If the Hausdorff ample groupoids $\mG$ and $\mH$ are isomorphism, then clearly $A_R(\mG)\cong  A_R(\mH)$. Further, since from the outset, $\mG \cong \mH$  induces $\mGo \cong \mHo$, we have the \emph{diagonal isomorphism} $A_R(\mGo)\cong  A_R(\mHo)$ as well. Renault in \cite{renaultcartan} established the converse of this statement for certain groupoid $C^*$-algebras. Several recent papers progressively improved this result in the algebraic setting (see \cite{aradia, cardia} and \cite{steinbergdia}). Combining with the full group invariant, we have the following theorem, relating groupoids, inverse semigroups and algebras.

\begin{thm}\label{jdyr6bdfhe6e}
Let $R$ be a unital commutative ring without nontrivial idempotents and let $\mG$ and $\mH$ be Hausdorff effective ample groupoids. Then the following are equivalent.
\begin{enumerate}[\upshape(1)] 
\item  $\mG$ and $\mH$ are isomorphic;
\medskip
\item  the inverse semigroups $\mG^a$ and $\mH^a$ are isomorphic;
\medskip

\item  the inverse semigroups $[[G]]$ and $[[H]]$ are isomorphic;

\medskip
\item  there exists a diagonal-preserving isomorphism between the Steinberg algebras $A_R(\mG)$ and $A_R (\mH)$. 

\end{enumerate}
\end{thm}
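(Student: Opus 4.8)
The plan is to prove the cycle of equivalences $(1)\Rightarrow(2)\Rightarrow(3)\Rightarrow(1)$ together with the bridge to the algebraic statement $(1)\Leftrightarrow(4)$, since the combinatorial and analytic/algebraic invariants are best handled by different machinery. The implication $(1)\Rightarrow(2)$ is essentially functorial: an isomorphism $\phi\colon\mG\to\mH$ of topological groupoids is a homeomorphism preserving the groupoid structure, so it carries compact open bisections to compact open bisections bijectively, and one checks directly that the induced map $U\mapsto\phi(U)$ respects the operations $U.V=UV$ and $U^*=U^{-1}$. Thus $\phi$ descends to an isomorphism $\mG^a\cong\mH^a$. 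The implication $(2)\Rightarrow(3)$ should be read as a restriction: since $[[G]]=\pi(\mG^a)$ is the image of $\mG^a$ under the canonical action $\pi\colon\mG^a\to\mathcal I(\mGo)$ of Lemma~\ref{campfresh}, and effectiveness guarantees $\pi$ is \emph{injective} (so $\pi$ is an isomorphism onto its image), an isomorphism $\mG^a\cong\mH^a$ transports to an isomorphism of the images $[[G]]\cong[[H]]$. The effectiveness hypothesis is exactly what makes $\mG^a$ and its faithful representation $[[G]]$ interchangeable.

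The heart of the matter is the reconstruction implication $(3)\Rightarrow(1)$, which recovers the topological groupoid from an algebraically defined invariant. The strategy here is to first recover the unit space: $\mGo$ can be identified, up to homeomorphism, with the spectrum of the commutative idempotent subsemigroup $E(\mG^a)$ of compact open subsets of $\mGo$ (a Boolean-algebra/Stone-duality argument), and the effectiveness assumption lets us see $E(\mG^a)$ intrinsically inside $[[G]]$. Once $\mGo\cong\mHo$ is in hand, one reconstructs $\mG$ as the groupoid of germs $S\ltimes\mGo$ of the action of $S=[[G]]$ (equivalently $\mG^a$) on $\mGo$; the remark in \S\ref{hausfgrtgeud7} that \emph{the groupoid of germs of the action $\pi\colon\mG^h\to\mathcal I(\mGo)$ is $\mG$ itself} is precisely the tool that closes the loop, since effectiveness ensures the germ construction loses no information. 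An abstract isomorphism $[[G]]\cong[[H]]$ that is compatible with the recovered unit-space identification then yields an isomorphism of the germ groupoids, hence $\mG\cong\mH$.

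For the equivalence $(1)\Leftrightarrow(4)$, the forward direction $(1)\Rightarrow(4)$ is the easy functoriality already noted in the paragraph preceding the theorem: $\mG\cong\mH$ induces $A_R(\mG)\cong A_R(\mH)$ by functoriality of the Steinberg construction, and because the isomorphism carries $\mGo$ to $\mHo$ it restricts to the diagonal isomorphism $A_R(\mGo)\cong A_R(\mHo)$. The reverse direction $(4)\Rightarrow(1)$ is the genuine reconstruction theorem of Renault type in the algebraic setting, and here I would invoke the diagonal-preserving reconstruction results cited in the text (\cite{aradia,cardia,steinbergdia}): the hypothesis that $R$ has no nontrivial idempotents ensures that the diagonal subalgebra $A_R(\mGo)$ has a well-behaved spectrum of idempotents identifying $\mGo$, and effectiveness guarantees (via the centre/class-function description in \S\ref{stein4}, which places the relevant data on $\Iso(\mG)^\circ=\mGo$) that a diagonal-preserving isomorphism determines and is determined by an isomorphism of the underlying groupoids.

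I expect the main obstacle to be the reconstruction step $(3)\Rightarrow(1)$, and more precisely the bookkeeping needed to recover the \emph{topology} on $\mGo$ (not merely its points) intrinsically from the abstract inverse semigroup $[[G]]$, together with verifying that the recovered identification $\mGo\cong\mHo$ is genuinely compatible with the semigroup isomorphism so that the germ groupoids match. The effectiveness hypothesis is doing heavy lifting throughout, and the cleanest exposition will likely cite Lemma~\ref{campfresh} and the groupoid-of-germs identification of \S\ref{hausfgrtgeud7} rather than rebuild them; the remaining work is to assemble these pieces and to handle the idempotent-free coefficient ring assumption in $(4)\Rightarrow(1)$ so that the diagonal recovers the unit space uniquely.
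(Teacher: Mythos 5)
Your outline is sound and matches how this theorem is actually established: the paper itself offers no proof, merely assembling the statement from cited results (\cite{aradia,cardia,steinbergdia} for the equivalence of (1) and (4), and the non-commutative Stone duality / germ-groupoid machinery for the equivalence of (1) with (2) and (3)), and your decomposition --- functoriality for the easy directions, Lemma~\ref{campfresh} to identify $[[\mG]]$ with $\mG^a$ under effectiveness, Stone duality on $E(\mG^a)$ plus the groupoid-of-germs identification of \S\ref{hausfgrtgeud7} for the reconstruction step, and the diagonal-preserving reconstruction results for $(4)\Rightarrow(1)$ --- is precisely the route those references take. The only caveat is that, exactly as the survey does, you defer the two genuinely hard implications ($(3)\Rightarrow(1)$ and $(4)\Rightarrow(1)$) to the cited machinery rather than proving them, so your proposal is best read as a correct and faithful reduction to the literature rather than a self-contained argument.
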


Consider a graph with one vertex and two loops \[E:\, \, \, \, \, \, \, \, \, \, \,\xymatrix{\bullet  \ar@(lu,ld)  \ar@(ur ,dr) }\] 

\smallskip

\noindent  and its graph groupoid $\mG_E$ as described in \S\ref{gfgfgfg1}. The unit space $\mG_E^{(0)}$ is compact (Theorem~\ref{gfhtyfy6749}) and we can consider the full group $[[\mG_E]]$. On the other hand, let $L_k(E)$ be the Leavitt path algebra associated to $E$ over a field $k$. An element $a\in L_k(E)$ is called unitary if $a a^*=a^* a =1$. Consider the set 
\[P_{2,1}=\Big \{ a \in L_k(E) \text{ is unitary} \mid  a=\sum_{i=1}^l \alpha_i \beta_i ^* \Big \}, \]
where $\alpha_i,\beta_i$ are distinct paths in $E$ and the coefficients in the sum are all $1$. One can prove that 
\[P_{2,1} \cong [[\mG_E]],\] and they are isomorphic to the Thompson group $T_{2,1}$, which was constructed in 1965 and was the first infinite finitely presented simple group.  In fact, considering a graph with one vertex and $n$ loops, we retrieve Higman-Thompson groups $G_{n,1}$ constructed by Higman in 1974. We refer the reader to \cite{pardo,soren} for this line of research.

\subsection{Homology and K-theory}

The homology theory for \'etale groupoids was introduced by Crainic and Moerdijk~\cite{crainicmoerdijk} 
who showed these groups are invariant under Morita equivalences of \'etale groupoids and established some spectral sequences which used for the computation of these homologies. Matui \cite{matui2012,matui2015,matui2016} considered this homology theory in relation with the dynamical properties of groupoids and their full groups.   In~\cite{matui2012} Matui proved, using Lindon-Hochschild-Serre spectral sequence established by Crainic and Moerdijk that for an \'etale groupoid $\mG$ arising from subshifts of finite type, the homology groups $H_0(\mG)$ and $H_1(\mG)$ coincide with $K$-groups $K_0(C^*(\mG))$ and $K_1(C^*(\mG))$, respectively.  Here $C^*(\mG)$ is the groupoid $C^*$-algebra associated to $\mG$ which were first systematically studied by Renault in his seminal work~\cite{renault}. In the language of graphs, Matui proved that for a finite graph $E$ with no sinks
\begin{equation}\label{gftrgftrg}
K_0(C^*(E)) \cong H_0 (\mG_E) \, \, \text{   and,    }  K_1(C^*(E)) \cong H_1 (\mG_E). 
\end{equation}

In this section we recall the construction of the homology of an ample groupoids and recount Matui's conjecture relating $K$-groups of groupoid $C^*$-algebras to the homology of its groupoid (Conjecture~\ref{popokjhgfd}). It would be very desirable to establish a relation between the homology groups and $K$-groups of Steinberg algebras, as here we have higher $K$-theories available and strong $K$-theory machinery which works on them~\cite{quillen}.  

Let $X$ be a locally compact Hausdorff space and $R$ a topological abelian group.  Denote by $C_c(X,R)$  the set of $R$-valued continuous functions with compact support. With point-wise addition, $C_c(X,R)$ is an abelian group. 
Let $\pi:X\to Y$ be a local homeomorphism 
between locally compact Hausdorff spaces.  
For $f\in C_c(X,R)$, define the map $\pi_*(f):Y\to R$ by 
\[
\pi_*(f)(y)=\sum_{\pi(x)=y}f(x). 
\]
 Thus $\pi_*$ is a homomorphism from $C_c(X,R)$ to $C_c(Y,R)$ which makes $C_c(-,R)$  a functor from the category of locally compact Hausdorff spaces with local homeomorphisms to the category of abelian groups.

Let $\mG$ be an \'etale groupoid. For $n\in\N$, we write $\mG^{(n)}$ 
for the space of composable strings of $n$ elements in $\mG$, that is, 
\[
\mG^{(n)}=\{(g_1,g_2,\dots,g_n)\in\mG^n\mid
s(g_i)=r(g_{i+1})\text{ for all }i=1,2,\dots,n{-}1\}. 
\]
For $i=0,1,\dots,n$, with $n\geq 2$
we let $d_i:\mG^{(n)}\to \mG^{(n-1)}$ be a map defined by 
\[
d_i(g_1,g_2,\dots,g_n)=\begin{cases}
(g_2,g_3,\dots,g_n) & i=0 \\
(g_1,\dots,g_ig_{i+1},\dots,g_n) & 1\leq i\leq n{-}1 \\
(g_1,g_2,\dots,g_{n-1}) & i=n. 
\end{cases}
\]
When $n=1$, we let $d_0,d_1:\mG^{(1)}\to\mG^{(0)}$ be 
the source map and the range map, respectively. 
Clearly the maps $d_i$ are local homeomorphisms.

Define the homomorphisms $\partial_n:C_c(\mG^{(n)},R)\to C_c(\mG^{(n-1)},R)$ 
by 
\begin{equation} \label{hdtfjdjjd}
\partial_1=s_*-r_* \text{~~~~and~~~~}
\partial_n=\sum_{i=0}^n(-1)^id_{i*}. 
\end{equation}
One can check that the sequence 
\begin{equation}\label{moorcomlex}
0\stackrel{\partial_0}{\longleftarrow}
C_c(\mG^{(0)},R)\stackrel{\partial_1}{\longleftarrow}
C_c(\mG^{(1)},R)\stackrel{\partial_2}{\longleftarrow}
C_c(\mG^{(2)},R)\stackrel{\partial_3}{\longleftarrow}\cdots
\end{equation}
is a chain complex of abelian groups.

The following definition comes from \cite{crainicmoerdijk,matui2012}.

\begin{deff}\label{homology}({\it Homology groups of a groupoid $\mG$})
Let $\mG$ be an \'etale groupoid. Define the homology groups of $\mG$ with coefficients $R$, $H_n(\mG,R)$, $n\geq 0$, to be the homology groups of the Moore complex~(\ref{moorcomlex}), 
i.e., $H_n(\mG,R)=\ker\partial_n/\Ima\partial_{n+1}$. 
When $R=\Z$, we simply write $H_n(\mG)=H_n(\mG,\Z)$. 
In addition, we define 
\[
H_0(\mG)^+=\{[f]\in H_0(\mG)\mid f(x)\geq0\text{ for all }x\in\mG^{(0)}\}, 
\]
where $[f]$ denotes the equivalence class of $f\in C_c(\mG^{(0)},\Z)$. 
\end{deff}

Extending Matui's result (\ref{gftrgftrg}), in \cite{hazli} using the description of monoid of Leavitt path algebras, it could be proved that for any graph (with sinks, source and infinite emitters), we have 
\begin{equation*}
H_0(\mG_E) \cong K_0 (A(\mG_E)) \cong K_0 (L(E)) \cong K_0 (C^*(E))\cong K_0 (C^*(\mG_E)).
\end{equation*}

Before stating Matui's conjecture (Conjecture~\ref{popokjhgfd}) we also state a class of groupoids that the zeroth homology $H_0$ coincides with Grothendieck group $K_0$ and their algebras fall into Elliott's class of algebras that can be classified by $K_0$-groups. 

Let $\mG$ be a second countable \'etale groupoid whose unit space is compact and totally disconnected. Then the subgroupoid $\mH \subseteq  \mG$ is an 
\emph{elementary subgroupoid} if $\mH$ is a compact open principal subgroupoid of $\mG$ such that $\mHo = \mGo$ The groupoid $\mG$ is called an \emph{AF groupoid} if it can be written as an increasing union of elementary subgroupoids.

If  $\mG$ is an AF groupoid, then Steinberg algebra $A_R(\mG)$, for a field $R$, is an ultramatricial algebra (or the reduced groupoid $C^*$-algebra $C^*(\mG)$ is an AF algebra and thus the terminology). For such groupoids, there is an order-preserving isomorphism 
\begin{align*}
\pi: H_0(\mG) &\longrightarrow K_0(A_R(\mG)),\\ 
[1_{\mGo}] &\longmapsto [1_{A_R(\mGo}].
\end{align*}

We have then the following theorem. 
\begin{thm}
Let $R$ be a field and $\mG$ and $\mH$ are AF groupoids. Then the following are equivalent.
\begin{enumerate}[\upshape(1)] 
\item  $\mG$ and $\mH$ are isomorphic;
\medskip
\item There is an ordered preserving isomorphism $H_0(\mG) \cong H_0(\mH)$ which sends $[1_{\mGo}]$ to $[1_{\mHo}]$;

\medskip
\item  there exists a $R$-algebra isomorphism between the Steinberg algebras $A_R(\mG)$ and $A_R (\mH)$. 

\end{enumerate}
\end{thm}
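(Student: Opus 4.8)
The plan is to establish the cycle of implications $(1)\Rightarrow(3)\Rightarrow(2)\Rightarrow(1)$. The first implication is immediate: an isomorphism of Hausdorff ample groupoids $\mG\cong\mH$ carries the inverse semigroup $\mG^h$ onto $\mH^h$ and respects unions and disjoint unions of compact open bisections, so by the universal property of Definition~\ref{defsteinberg} it induces an $R$-algebra isomorphism $A_R(\mG)\cong A_R(\mH)$, exactly as noted in the discussion preceding Theorem~\ref{jdyr6bdfhe6e}.

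For $(3)\Rightarrow(2)$ I would pass through $K$-theory. Since $\mG$ and $\mH$ are AF groupoids, their unit spaces are compact, so $1_{\mGo}$ and $1_{\mHo}$ are the identities of the unital algebras $A_R(\mG)$ and $A_R(\mH)$. The functor $K_0$ sends a unital $R$-algebra isomorphism to an isomorphism of ordered abelian groups carrying the class of the unit to the class of the unit, because isomorphic algebras have equivalent categories of finitely generated projective modules and the positive cone of $K_0$ is precisely the image of this semigroup of projectives. Composing with the order-preserving isomorphism $H_0(\mG)\xrightarrow{\ \sim\ }K_0(A_R(\mG))$, $[1_{\mGo}]\mapsto[1_{A_R(\mG)}]$ recalled just before the theorem (and its analogue for $\mH$), an $R$-algebra isomorphism $A_R(\mG)\cong A_R(\mH)$ yields the chain
\[
H_0(\mG)\;\cong\;K_0(A_R(\mG))\;\cong\;K_0(A_R(\mH))\;\cong\;H_0(\mH),
\]
which is order-preserving and sends $[1_{\mGo}]$ to $[1_{\mHo}]$.

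The substance of the theorem is $(2)\Rightarrow(1)$, the groupoid analogue of Elliott's classification of AF algebras by their pointed dimension groups. I would write $\mG=\bigcup_n\mG_n$ and $\mH=\bigcup_m\mH_m$ as increasing unions of elementary subgroupoids. Because $C_c(-,\Z)$ is a functor on local homeomorphisms and homology commutes with these inductive limits, one has $H_0(\mG)=\varinjlim_n H_0(\mG_n)$ and $H_0(\mH)=\varinjlim_m H_0(\mH_m)$, each $H_0(\mG_n)$ being a finitely generated ordered group since an elementary groupoid is a finite disjoint union of full equivalence-relation groupoids on clopen sets (in particular principal, hence effective). The two lemmas driving the argument are an \emph{existence} statement---every positive, unit-preserving homomorphism $H_0(\mG_n)\to H_0(\mH_m)$ is induced by a groupoid homomorphism $\mG_n\to\mH_m$---and a \emph{uniqueness} statement---any two such groupoid homomorphisms inducing the same map on $H_0$ differ by conjugation by a compact open bisection. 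Feeding the given ordered isomorphism $\phi:H_0(\mG)\to H_0(\mH)$ and its inverse into Elliott's approximate-intertwining scheme then produces compatible groupoid embeddings between cofinal subsequences of the $\mG_n$ and $\mH_m$, whose inductive limit is an isomorphism $\mG\cong\mH$.

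The main obstacle is exactly this last step: setting up the existence and uniqueness lemmas for elementary groupoids and verifying that the intertwining can be carried out so that the distinguished classes $[1_{\mGo}]$ and $[1_{\mHo}]$ are matched at every stage, which is what forces the limit map to be a genuine unit-respecting groupoid isomorphism rather than merely a Morita equivalence. An alternative organisation would prove $(2)\Rightarrow(3)$ directly, using the algebraic Elliott classification of ultramatricial $R$-algebras by ordered $K_0$ together with the identification $H_0\cong K_0$, and then recover $(1)$; but reconstructing $\mG$ from $A_R(\mG)$ still ultimately rests on the same intertwining, so the genuine work remains concentrated in $(2)\Rightarrow(1)$.
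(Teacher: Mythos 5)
The paper states this theorem without proof --- it is quoted as a known classification result (in essence Krieger's theorem in the groupoid formulation of Giordano--Putnam--Skau, Renault and Matui) --- so there is no in-paper argument to measure you against; I can only assess your outline on its own terms. Your cycle $(1)\Rightarrow(3)\Rightarrow(2)\Rightarrow(1)$ is the standard organisation. The implications $(1)\Rightarrow(3)$ and $(3)\Rightarrow(2)$ are correct as you give them: an algebra isomorphism between the unital algebras $A_R(\mG)$ and $A_R(\mH)$ automatically matches the units, and composing the induced ordered $K_0$-isomorphism with the map $\pi\colon H_0(\mG)\to K_0(A_R(\mG))$ displayed just before the theorem gives (2). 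Note, though, that both steps lean on facts the survey itself only asserts, namely that $A_R(\mG)$ is ultramatricial and that $\pi$ is an order isomorphism sending $[1_{\mGo}]$ to the class of the identity.

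For $(2)\Rightarrow(1)$ your Elliott-intertwining scheme is the right idea, but as written it has two gaps. First, the existence and uniqueness lemmas for maps of elementary groupoids, which you correctly identify as carrying all the weight, are not proved; without them the argument is a plan rather than a proof. Second, your claim that each $H_0(\mG_n)$ is finitely generated is false in general: an elementary subgroupoid $\mH_n$ with unit space a Cantor set has $H_0(\mH_n)\cong C_c(\mGo/\mH_n,\Z)$, the continuous $\Z$-valued functions on a compact totally disconnected orbit space, which is typically not finitely generated (already the trivial groupoid $\mH_n=\mGo$ gives $C(\mGo,\Z)$). The standard repair is to run the inductive limit simultaneously over the elementary subgroupoids and over refinements of clopen partitions of their orbit spaces, so that each finite stage really is a finite direct sum of copies of $\Z$ with the obvious positive cone; only then do the existence and uniqueness lemmas reduce to the finite combinatorics of multiplicity matrices as in the Bratteli--Elliott argument. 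With that correction your approach goes through and agrees with the route taken in the literature the survey is implicitly citing.
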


This theorem points to a direction which is gaining ever more importance of finding a class of \'etale groupoids that a variant of $K$-theory and homology theory can be a complete invariants. 

Recall that an \'etale groupoid $\mG$ is said to be effective if the interior of its isotropy coincides with its unit space $\mGo$ and minimal if every orbit is dense.

The following conjecture of Matui (\cite[Conjecture 2.6]{matui2016}) expresses the $K$-theory of a groupoid $C^*$-algebra as a direct sum of homology groups of the associated groupoid. For one thing, this indicates that the homology groups provide much finer invariants than the $K$-groups.

\begin{conjecture}[Matui's HK Conjecture]\label{popokjhgfd}
 Let $\mG$ be a locally compact Hausdorff \'etale groupoid such that $\mGo$ is a Cantor set. Suppose that $\mG$ is both effective and minimal. Then 

\begin{equation}\label{matuiconj0}
K_0(C^*_r(\mG)) \cong \bigoplus_{i=0}^{\infty} H_{2i}(\mG)
\end{equation}
\begin{equation}
K_1(C^*_r(\mG)) \cong \bigoplus_{i=0}^{\infty} H_{2i+1}(\mG) 
\end{equation}

\end{conjecture}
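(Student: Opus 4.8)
The plan is to construct a spectral sequence converging to $K_*(C^*_r(\mG))$ whose second page is assembled from the groupoid homology groups $H_*(\mG)$, and then to argue that it degenerates and that the resulting filtration splits into a direct sum. This is the groupoid counterpart of the Atiyah--Hirzebruch spectral sequence, and it is the natural mechanism by which even-indexed homology could feed into $K_0$ and odd-indexed homology into $K_1$.

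First I would replace the left-hand side of~(\ref{matuiconj0}) by a more computable object through the Baum--Connes assembly map. Effectiveness and minimality do not by themselves deliver the Baum--Connes property, so one works within the class of groupoids for which it is available (for instance amenable, or more generally a-$T$-menable $\mG$); there the assembly map $\mu\colon K^{\mathrm{top}}_*(\mG)\to K_*(C^*_r(\mG))$ is an isomorphism, reducing the problem to the computation of the $\mG$-equivariant topological $K$-homology of the classifying space for proper actions.

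Second, I would set up the Crainic--Moerdijk type spectral sequence computing this topological $K$-homology. Since $\mGo$ is a Cantor set and the face maps $d_i$ of~(\ref{moorcomlex}) are local homeomorphisms, the Moore complex computes $H_*(\mG)$, and one upgrades it to a homology theory with coefficients in the $K$-theory of a point. Complex $K$-theory of a point being $\Z$ in even degrees and $0$ in odd degrees, the second page reads
\[
E^2_{p,q}=\begin{cases} H_p(\mG) & q \text{ even},\\ 0 & q \text{ odd},\end{cases}
\]
and converges to $K_{p+q}(C^*_r(\mG))$. Collecting total degrees of a fixed parity already exhibits $K_0$ and $K_1$ as iterated extensions of the even- and odd-indexed homology groups, which is exactly the shape of~(\ref{matuiconj0}).

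The hard part will be the final step: proving that the spectral sequence degenerates at $E^2$, so that every higher differential $d^r$ with $r\ge 2$ vanishes, and that the ensuing extension problems are trivial so that the iterated extension is a genuine direct sum. These differentials are the groupoid analogues of secondary operations and there is no formal reason for them to die; controlling them must use the dynamics of $\mG$ in an essential way, and one should not expect success for an arbitrary effective minimal groupoid without extra input. The realistic route is to establish degeneration first for the building blocks---AF groupoids, where only $H_0$ survives and the claim collapses to the AF statement recorded above, and groupoids of the form $\mH\rtimes\Z$, where a Pimsner--Voiculescu sequence gives the two-term case directly---and then to propagate the vanishing of differentials and the splitting of extensions along the inductive limits and groupoid extensions out of which $\mG$ is assembled, verifying at each stage that no new obstruction is introduced.
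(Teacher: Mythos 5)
This statement appears in the paper as a \emph{conjecture}, not a theorem: the paper supplies no proof of it, and only records that the isomorphisms are known in special cases (AF groupoids with compact unit space, finite Cartesian products of groupoids of shifts of finite type, and the Katsura--Exel--Pardo groupoids $\mG_{A,B}$ with $A\geq 0$). Your proposal correctly identifies the mechanism by which one would try to prove it --- an Atiyah--Hirzebruch/Crainic--Moerdijk type spectral sequence with $E^2_{p,q}=H_p(\mG)$ for $q$ even, $0$ for $q$ odd, converging to $K_{p+q}(C^*_r(\mG))$, after an assembly-map reduction --- and this is indeed the route followed in the positive cases in the literature. But what you have written is a strategy, not a proof. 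The entire content of the conjecture is concentrated in the step you defer: that all differentials $d^r$, $r\ge 2$, vanish and that the resulting filtrations of $K_0$ and $K_1$ split. You concede there is ``no formal reason for them to die'' and that one ``should not expect success for an arbitrary effective minimal groupoid without extra input''; that concession is the gap, and your proposed remedy (verify degeneration on building blocks and propagate along inductive limits and extensions) presupposes a structure theory --- that every effective minimal ample groupoid with Cantor unit space is assembled from AF pieces and $\Z$-extensions --- which is not available. Note also that effectiveness and minimality do not give amenability, so even your first reduction via Baum--Connes is not justified by the stated hypotheses.

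More decisively, the missing step is not merely unproven but is now known to fail: Scarparo exhibited effective minimal ample groupoids with Cantor unit space (transformation groupoids of odometer actions of groups with torsion, e.g.\ the infinite dihedral group), for which $K_*(C^*_r(\mG))$ and the direct sum of the $H_*(\mG)$ genuinely differ, and further (even principal) counterexamples have since been found. So for the statement as given, the spectral sequence must have a nonzero higher differential or a non-split extension, and no argument can close the gap without strengthening the hypotheses (for instance, torsion-freeness of isotropy and low dynamical dimension, under which degeneration has been established). The honest conclusion is that your outline reproduces the expected proof \emph{scheme} for the cases where the conjecture holds, but does not and cannot prove the conjecture in the generality stated.
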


Apart from (arbitrary graphs) Matui proved this conjecture for AF groupoids with compact unit space and in \cite{matui2016} for all finite Cartesian products of groupoids associated to shifts of finite type. Ortega also showed in \cite{ortega} that the conjecture is valid for the Katsura-Exel-Pardo groupoid 
$\mG_{A,B}$ associated to square integer matrices with $A  \geq 0$.

\section{Partial crossed product rings}\label{cross4}

In this section we consider the ``partially'' group ring like algebras and relate them to Steinberg algebras. This is also another demonstration how the inverse semigroups and algebras arising from them are related to groupoids and algebras coming from them. 

Let $\pi=(\pi_s, A_s, A)_{s\in\SS}$ be a partial action of the inverse semigroup $\SS$ on an algebra $A$. Here $A_s \subseteq A$, $s\in \SS$ is an ideal of $A$ and $\pi_s: A_{s^*}\xra A_s$ an isomorphism such that for all $s, t \in\SS$ 

\begin{itemize}
\item[(i)] $\pi^{-1}_s=\pi_{s^*}$;

\item[(ii)] $\pi_s(A_{s^*}\cap A_t)\sub A_{st}$;

\item[(iii)] if $s\leq t$, then $A_s\sub A_t$;

\item[(iv)] For every $x\in A_{t^*}\cap A_{t^*s^*}$, $\pi_s(\pi_t(x))=\pi_{st}(x)$.
\end{itemize}

This is a generalisation of the concept of partial group actions (see~\S\ref{exphyhyh5}). Define $\LL$ as the set of all formal forms $\sum_{s\in \SS}a_s\d_s$ (with finitely many $a_s$ nonzero), where $a_s\in A_s$ and $\d_s$ are symbols, with addition defined in the obvious way and multiplication being the linear extension of $$(a_s\d_s) (a_t\d_t)=\pi_s\big(\pi_{s^{-1}}(a_s)a_t\big)\d_{st}.$$ Then $\LL$ is an algebra which is possibly not associative. Exel and Vieira proved under which condition $\LL$ is associative (see \cite[Theorem 3.4]{exelvieira}). In particular, if each ideal $A_s$ is idempotent or non-degenerate, then $\LL$ is associative (see \cite[Theorem 3.4] {exelvieira} and \cite[Proposition 2.5]{de}). 
This algebra is too large for us and we need to consider this ring modulo idempotents, as follows. 
Consider $\mathcal{N}=\langle a\d_s-a\d_t: a\in A_s, s\leq t\rangle$, which is the ideal generated by $a\d_s-a\d_t$. The \emph{partial skew inverse semigroup ring} $A\rtimes_{\pi}\SS$ is defined as the quotient ring $\LL/ \mathcal{N}$.

Next we equip these algebras with a graded structure. Suppose $\SS$ is a $\Gamma$-graded inverse semigroup (see \S\ref{unfairjfgng}). 
Observe that the algebra $\LL$ is a $G$-graded algebra with elements $a_s\d_s\in\LL$ with $a_s\in A_s$ are homogeneous elements of degree $w(s)$. Furthermore, if $s\leq t$, then $s=ts^*s$. It follows that $w(s)=w(t)w(s^*)w(s)=w(t)$. Hence $a\d_s-a\d_t$ with $s\leq t$ and $a\in A_s$ is a homogeneous element in $\LL$. Thus the ideal $\mathcal N$ generated by homogeneous elements is a graded ideal and therefore the quotient algebra $A\rtimes_{\pi}\SS= \LL/\mathcal N$ is $\Gamma$-graded.

Let $X$ be a Hausdorff topological space and $R$ a unital commutative ring with a discrete topology. Let $C_R(X)$ be the set of $R$-valued continuous function (i.e., locally constant) with compact support (see also \S\ref{hf7hgj55}).  If $D$ is a compact open subset of $X$, the characteristic function of $D$, denoted by $1_D$, is clearly an element of $C_R(X)$. In fact, every $f$ in $C_R(X)$ may be written as 
 \begin{equation}\label{rep}
 f=\sum_{i=1}^nr_i1_{D_i}, 
 \end{equation} where $r_i\in R$ and the $D_i$ are compact open, pairwise disjoint subsets of $X$. $C_R(X)$ is a commutative $R$-algebra with pointwise multiplication. The support of $f$, defined
by $\supp(f)=\{x\in X\;|\; f(x)\neq 0\}$, is clearly a compact open subset.

We observe that $C_R(X)$ is an idempotent ring. We have \begin{equation}\label{equationforlocal}
 \sum_{i=1}^n1_{D_i} \cdot f=f \cdot \sum_{i=1}^n1_{D_i}=\sum_{i=1}^nr_i1_{D_i}=f
 \end{equation} for any $f\in C_R(X)$ which is written as \eqref{rep}. So $C_R(X)$ is a ring with local units and thus an idempotent ring.

For $\Gamma$-graded Hausdorff ample  groupoid $\mG$, recall the inverse semigroup $\mG^{h}$ from (\ref{jhfgyt7595}) and the 
action of $\mG^h$ on $\mGo$ from (\ref{fhfhgjg8585}). 
There is an induced action $(\pi_B, C_R(BB^{-1}), C_R(\mGo))_{B\in \mG^{h}}$ of $\mG^{h}$ on an algebra $C_R(\mGo)$. Here the map $\pi_B: C_R(B^{-1}B)\xra C_R(BB^{-1})$ is given by $\pi_B(f)=f\circ \pi_B^{-1}$. We still denote the induced action by $\pi$. In this case, $\LL=\big \{\sum_{B\in \mG^{h}}a_B\d_B\;|\; a_B\in C_R(BB^{-1})\big \}$ is associative, since each ideal $C_R(BB^{-1})$ is idempotent. Since $\mG^{h}$ is $\Gamma$-graded, $C_R(\mGo)\rtimes_{\pi} \mG^{h}$ is a $\Gamma$-graded algebra. 

We are in a position to relate partial skew inverse semigroup rings to Steinberg algebras.

\begin{thm} \label{thmpsisr} Let $\mG$ be a $\G$-graded Hausdorff ample groupoid and $\pi=(\pi_B, C_R(BB^{-1}), C_R(\mGo))_{B\in\mG^{h}}$ the induced action of $\mG^{h}$ on $C_R(\mGo)$. Then there is a $\Gamma$-graded isomorphism of $R$-algebras
\begin{equation}
A_R(\mG)\cong_{\gr}C_R(\mG^{(0)})\rtimes_{\pi}\mG^{h}.
\end{equation} 
\end{thm}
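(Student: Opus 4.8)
The plan is to exhibit the desired isomorphism as a pair of mutually inverse $\Gamma$-graded homomorphisms. First I would construct the map $\phi\colon A_R(\mG)\to C_R(\mGo)\rtimes_\pi\mG^h$ using the universal property of the Steinberg algebra recorded after Definition~\ref{defsteinberg}. For each $B\in\mG^h$ set $T_B:=\overline{1_{BB^{-1}}\,\d_B}$, the image in the quotient $\LL/\NN=C_R(\mGo)\rtimes_\pi\mG^h$ of the element $1_{BB^{-1}}\d_B\in\LL$; this makes sense because $1_{BB^{-1}}\in C_R(BB^{-1})=A_B$. It then suffices to verify that the family $\{T_B\}$ satisfies (R1), (R2) and (R3), whence the universal property supplies a homomorphism $\phi$ with $\phi(t_B)=T_B$.

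For (R1) note $A_\emptyset=C_R(\emptyset)=0$, so $T_\emptyset=0$. For (R2) I would compute the twisted product
\[(1_{BB^{-1}}\d_B)(1_{DD^{-1}}\d_D)=\pi_B\big(\pi_{B^{-1}}(1_{BB^{-1}})\cdot 1_{DD^{-1}}\big)\,\d_{BD},\]
using $\pi_{B^{-1}}(1_{BB^{-1}})=1_{B^{-1}B}$ and the pointwise product $1_{B^{-1}B}\cdot 1_{DD^{-1}}=1_{B^{-1}B\cap DD^{-1}}$; the whole coefficient then becomes $1_{\pi_B(B^{-1}B\cap DD^{-1})}$. The key (routine) set identity $\pi_B(B^{-1}B\cap DD^{-1})=(BD)(BD)^{-1}$, which follows from $B^{-1}B=s(B)$, $DD^{-1}=r(D)$ and the description $\pi_B\colon u\mapsto r(Bu)$, then gives $T_BT_D=T_{BD}$. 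For (R3) with $B,D$ disjoint, $\g$-graded and $B\cup D$ a bisection, I would use $B\le B\cup D$ and $D\le B\cup D$ in $\mG^h$ together with the defining relations of $\NN$ to rewrite $\overline{1_{BB^{-1}}\d_B}=\overline{1_{BB^{-1}}\d_{B\cup D}}$ and similarly for $D$; since $(B\cup D)(B\cup D)^{-1}=BB^{-1}\sqcup DD^{-1}$ one gets $T_B+T_D=T_{B\cup D}$. That $\phi$ is graded is immediate, as $t_B$ and $\d_B$ both carry degree $c(B)$.

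Next I would build the inverse $\psi$. Working first on $\LL$, I would send $a\,\d_B$ (with $a\in C_R(BB^{-1})$) to the function on $\mG$ supported on $B$ whose value at $\gamma\in B$ is $a(r(\gamma))$; equivalently $\psi_0(1_D\,\d_B)=1_{B\cap r^{-1}(D)}$ for compact open $D\subseteq BB^{-1}$, extended $R$-linearly. Described as a function this is manifestly independent of the chosen representation of $a$ and lands in $A_R(\mG)$, since $B\cap r^{-1}(D)=(r|_B)^{-1}(D)$ is again a graded compact open bisection. I would then check that $\psi_0$ kills $\NN$: for $B\le C$ and $a\in A_B$, the functions $(a\circ r)|_B$ and $(a\circ r)|_C$ coincide because $a$ is supported on $r(B)$ and $r$ is injective on $C\supseteq B$. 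Hence $\psi_0$ descends to $\psi\colon C_R(\mGo)\rtimes_\pi\mG^h\to A_R(\mG)$.

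The main obstacle is verifying that $\psi_0$ is multiplicative, i.e. that the twisted product in $\LL$ is carried to convolution in $A_R(\mG)$. For this I would compare, for $\gamma\in BD$ with its unique factorisation $\gamma=\alpha\beta$, $\alpha\in B$, $\beta\in D$, the value $(\psi_0(a\d_B)*\psi_0(c\d_D))(\gamma)=a(r(\gamma))\,c(s(\alpha))$ against the value at $r(\gamma)$ of the coefficient $\pi_B(\pi_{B^{-1}}(a)\,c)$ of the product $a\d_B\cdot c\d_D$; unwinding $\pi_B$ and $\pi_{B^{-1}}$ and using $s(\alpha)=\pi_{B^{-1}}(r(\alpha))$ shows both equal $a(r(\gamma))\,c(s(\alpha))$. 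Finally a direct check on generators gives $\psi\circ\phi(t_B)=t_B$ and $\phi\circ\psi(\overline{a\d_B})=\overline{a\d_B}$, so $\phi$ and $\psi$ are mutually inverse graded isomorphisms, completing the proof. The bookkeeping around $\pi_B$, $\pi_{B^{-1}}$ and the unique factorisation in bisections is where all the real content sits; everything else is formal.
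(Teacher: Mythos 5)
Your proof is correct and follows essentially the same route as the paper's: you define the same generators $1_{BB^{-1}}\d_B$ (the paper's $1_{r(D)}\delta_D$), invoke the universal property of $A_R(\mG)$ for one direction, and use the same evaluation map $a\d_B \mapsto (a\circ r)|_B$ for the inverse. The only difference is that you actually carry out the verifications of (R1)--(R3), the descent through $\NN$, and multiplicativity, which the paper leaves as ``one can check.''
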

\begin{proof}
For each $D\in \mG^{(h)}$, define 
\[t_D=1_{r(D)}\delta_D  \in C_R(\mG^{(0)})\rtimes_{\pi}\mG^{h}.\]

One can check that the set $\{t_D\; | \; D \in\mG^{(h)}\}$ satisfies (R1), (R2) and (R3) relations in the Definition \ref{defsteinberg} of Steinberg algebras. Thus we obtain a homomorphism $f: A_R(\mG)\rightarrow C_R(\mG^{(0)})\rtimes_{\pi}\mG^{h}.$

 Next define a map $g: C_R(\mG^{(0)}) \rtimes_{\pi}\mG^{(h)}\xra A_R(\mG)$. For each 
 $B\in \mG^{(h)}$ and $a_B\d_B\in \mathcal{L}$, we define 
 \begin{equation*}
g(a_B\d_B)=
\begin{cases}
a_B(r(x)), & \text{~if~} x\in B,\\
0, & \text{otherwise}.
\end{cases}
\end{equation*}
One can check that the map $g$ is well-defined and $gf=\id_{A_R(\mG)}$ and $fg=1_{C_R(\mG^{(0)})\rtimes_{\pi}\mG^{h}}$. 
\end{proof}

We refer the reader to \cite{beutergoncalves,beuter,hazli} for more results relating the partial inverse semigroup algebras to Steinberg algebras and \cite{exel20081} for the $C^*$-versions of these results. 

\section{Non-Hausdorff ample groupoids}
\label{sec:nonH}

We finish this paper with a brief discussion about non-Hausdorff groupoids. 
When relaxing the Hausdorff assumption on $\mG$, we still insist that the unit space $\mGo$ be Hausdorff so that, in the setting of \'etale and ample groupoids, they are locally Hausdorff.  With this weakened hypothesis the universally defined Steinberg algebra of Definition~\ref{defsteinberg} no longer works.  However, we can still study the Steinberg algebra of such a groupoid using one of the other characterisations, such as 
\[A_R(\mG)= \operatorname{span} \{\chi_B : B \in \mG^h\}.  \]
Since $\mG$ is not Hausdorff,  compact open bisections are no longer closed and hence characteristic functions might not be continuous.  

Other fundamental results in the theory of Steinberg algebras fail for non-Hausdorff groupoids, for example the Uniqueness theorems (see \cite[Example~3.5]{cep}).  
Still, progress is slowly being made to develop a theory.  
Recall that in an ample groupoid $\mGo$ is always open in $\mG$.  It turns out that $\mGo$ is closed in $\mG$ if and only if $\mG$ is Hausdorff.  So an important step for understanding non-Hausdorff groupoids is  to understand the closure of $\mGo$.

\section{Acknowledgments}
The authors would like to acknowledge Australian Research Council grant DP160101481 and  Marsden grant 15-UOO-071 from the Royal Society of New Zealand.


\begin{thebibliography}{9999}


\bibitem{abrams2005} G. Abrams, G. Aranda Pino, {\it The Leavitt path algebra of a graph}, Journal of Algebra $\mathbf{293}$ (2005), 319--334.

\bibitem{AAS} G. Abrams, P. Ara, M. Siles Molina, Leavitt path algebras. Lecture Notes in Mathematics, vol. 2191, Springer Verlag, 2017.




\bibitem{ara2006} P. Ara, M.A. Moreno, E. Pardo, {\it Nonstable K-theory for graph algebras}, Algebras and Representation Theory
$\mathbf{10}$ (2007), 157--178.

\bibitem{aradia} P. Ara, J. Bosa, R. Hazrat, A. Sims,  {\it Reconstruction of graded groupoids from graded Steinberg algebras},  Forum Math., $\mathbf{29(5)}$ 2017, 1023--1037. 

\bibitem{ash} C.J. Ash, T. E.Hall, {\it Inverse semigroups on graphs}, Semigroup Forum $\mathbf{11}$ (1975)
140--145.


\bibitem{bates} T. Bates, D. Pask, I. Raeburn, W. Szyman\'ski, {\it The $C^*$-algebras of row-finite graphs}, New York Journal of
Mathematics $\mathbf{6}$, 307 (2000), 324. 


\bibitem{batista} E. Batista, {\it Partial actions: what they are and why we care}, Bull. Belg. Math. Soc. Simon Stevin
$\mathbf{24}$ (2017),  no.1, 35--71.

\bibitem{berber} S. Berberian, Baer $*$-rings. Die Grundlehren der mathematischen
    Wissenschaften, Band 195. Springer-Verlag, New York-Berlin, 1972.

\bibitem{beutergoncalves}V.M. Beuter, D. Gon\c{c}alves, {\it The interplay between Steinberg algebras and partial skew rings}, arXiv:1706.00127v1.

\bibitem{beuter}V.M. Beuter, L. Cordeiro, {\it The dynamics of partial inverse semigroup actions}, 



\bibitem{brandt} H. Brandt, {\it \"Uber eine Verallgemeinerung des Gruppenbegriffes}, Math. Ann. $\mathbf{96}$ (1926) 360--366.


\bibitem{brown1} R. Brown, Topology and groupoids. Third edition of Elements of modern topology, BookSurge, LLC, Charleston, SC, 2006. 


\bibitem{brownbang} R. Brown, {\it From groups to groupoids: A brief survey,} Bull. London Math. Soc. $\mathbf{19}$ (1987), 113--134.



\bibitem{soren} N. Brownlowe, A.P.W. S\o rensen, $L_{2,\mathbb Z} \otimes L_{2,\mathbb Z} $  {\it does not embed in} $L_{2,\mathbb Z}$, J. of Algebra $\mathbf{456}$  (2016) 1--22.


\bibitem{bcfs} J. Brown, L.O. Clark, C. Farthing, A. Sims, {\it Simplicity of algebras
    associated to \'etale groupoids}, Semigroup Forum $\mathbf{88}$ (2014), no. 2, 433--452.

\bibitem{bussexel} A. Buss, R. Exel, {\it Inverse semigroup expansions and their actions on $C^*$-algebras}, Illinois J. Math., $\mathbf{56}$ (2014), 1185--1212.


\bibitem{cardia} T.M. Carlsen, J. Rout, {\it Diagonal-preserving graded isomorphisms of Steinberg
algebras}, Commun. Contemp. Math., to appear.


\bibitem{clarkmichell} L.O. Clark, C. Edie-Michell, {\it Uniqueness theorems for Steinberg
    algebras}, Algebr. Represent. Theory $\mathbf{18}$ (2015), no. 4, 907--916.



\bibitem{cep} L.O. Clark, R. Exel, E. Pardo, {\it A generalised uniqueness theorem and the graded ideal structure of Steinberg algebras}, Forum Mathematicum, DOI:10.1515/forum-2016-0197.

\bibitem{cfst} L.O. Clark, C. Farthing, A. Sims, M. Tomforde, {\it A groupoid
    generalisation of Leavitt path algebras}, Semigroup Forum $\mathbf{89}$ (2014),
    501--517.




\bibitem{cs} L.O. Clark, A. Sims, {\it Equivalent groupoids have Morita equivalent
    Steinberg algebras}, J. Pure Appl. Algebra $\mathbf{219}$ (2015), 2062--2075.

\bibitem{chr} L.O. Clark, R. Hazrat, S.W. Rigby, {\it Strongly graded groupoids and strongly graded Steinberg algebras}, arXiv:1711.04904.


\bibitem{chuef} L.O. Clark, C. Edie-Michell, A. an Huef,  A. Sims,  {\it Ideals of Steinberg algebras of strongly effective groupoids, with applications to Leavitt path algebras}, Trans AMS, to appear. 	arXiv:1601.07238



\bibitem{connes} A. Connes, Noncommutative geometry, Academic Press, San Diego, 1994.

\bibitem{crainicmoerdijk}
M. Crainic, I. Moerdijk, {\it A homology theory for \'etale groupoids}, J. reine angew. Math. $\mathbf{521}$ (2000), 25--46. 


\bibitem{cuntz1} J. Cuntz,  \emph{Simple $C^*$-algebras generated by isometries}, Comm. Math. Phys. {\bf 57} (1977), no. 2, 173--185.




\bibitem{de} M. Dokuchaev, R. Exel, {\it Associativity of crossed products by partial actions, enveloping actions and partial representations}, Trans. Amer. Math. Soc. $\mathbf{357}$ (2005), no. 5, 1931--1952.





\bibitem{exel20081} R. Exel, {\it Inverse semigroups and combinatorial $C^*$-algebras}, Bulletin of the Brazilian Mathematical Society, New Series
2008, Volume 39, Issue 2, pp 191--313.




\bibitem{exel1998} R. Exel, {\it Partial Actions of Groups and Actions of Semigroups}, Proc. Am. Math. Soc. $\mathbf{126}$ (12) (1998) 3481--3494.





 


\bibitem{exelvieira} R. Exel, F. Vieira, {\it Actions of inverse semigroups arising from partial actions of groups}, J. Math. Anal. Appl. $\mathbf{363}$ (2010), 86--96.





\bibitem{giordano} T. Giordano, I.F. Putnam, C.F. Skau, {\it Full groups of Cantor minimal systems}, Israel J. Math. $\mathbf{111}$ (1999) 285--320.





\bibitem{hazli} R. Hazrat, H. Li, {\it Homology of \'etale groupoids, a graded approach}, arXiv:1806.03398 [math.KT]


\bibitem{higgins} P.J. Higgins \emph{Categories and Groupoids}, Reprints in Theory and Applications of Categories, {\bf 7} (2005) 1--195.

\bibitem{kap} I. Kaplansky, Rings of operators. W. A. Benjamin, Inc., New York-Amsterdam
    1968.


\bibitem{kp} A. Kumjian, D. Pask, {\it $C^{*}$-algebras of directed graphs and group
    actions}, Ergod. Th. Dynam. Systems $\mathbf{19}$ (1999), 1503--1519.
    
 \bibitem{kphigher}    A. Kumjian, D. Pask, {\it Higher rank graph $C^*$-algebras}, New York Journal of Mathematics $\mathbf{6}$, 1 (2000), 1--20.
    
  
\bibitem{kprr}  A. Kumjian, D. Pask, I. Raeburn, J. Renault, {\it Graphs, groupoids, and Cuntz-Krieger algebras}, Journal of
Functional Analysis $\mathbf{144}$  2 (1997), 505--541.  
    
  
  \bibitem{quillen} D. Quillen, \emph{Higher algebraic K-theory. I}. Algebraic K-theory, I: Higher K-theories (Proc. Conf., Battelle Memorial Inst., Seattle, WA, 1972), pp. 85--147. Lecture Notes in Math., Vol. 341, Springer, Berlin 1973.  
    
    
    \bibitem{Lawson} M.V. Lawson, 
\newblock {\em Inverse semigroups}.
\newblock World Scientific Publishing Co. Inc., River Edge, NJ, 1998.
\newblock The theory of partial symmetries.

    

\bibitem{vitt57} W.G. Leavitt, \emph{Modules without invariant basis number,} Proc. Amer. Math. Soc. {\bf 8} (1957), 322--328.



    

\bibitem{matui2012} H. Matui, {\it Homology and topological full groups of \'etale groupoids on totally disconnected spaces},
Proc. Lond. Math. Soc. (3) $\mathbf{104}$ (2012), 27--56.

\bibitem{matui2015} H. Matui, {\it Topological full groups of one-sided shifts of finite type,} J. reine angew. Math. $\mathbf{705}$ (2015),
35--84.

\bibitem{matui2016} H. Matui, {\it \'Etale groupoids arising from products of shifts of finite type}, Adv. Math. $\mathbf{303}$ (2016),
502--548.


\bibitem{zak} A. Mesyan, J. Mitchell, {\it The structure of a graph inverse semigroup}, Semigroup Forum $\mathbf{93}$ (2016), 
111--130.



\bibitem{ortega} E. Ortega, {\it Homology of the Katsura-Exel-Pardo groupoid}, arXiv:1806.09297.



\bibitem{pardo} E. Pardo, {\it The isomorphism problem for Higman?Thompson groups}, J. of Algebra  $\mathbf{344}$ (2011) 172--183.

\bibitem{paterson} A.L.T. Paterson, Groupoids, inverse semigroups, and their operator algebras, Birkhäuser, 1999.

\bibitem{Patersonsemi}
A.L.T. Paterson, {\it Graph inverse semigroups, groupoids and their $C^*$-algebras}, J. Operator Theory 48 (2002) 645--662.




\bibitem{renault} J. Renault, A groupoid approach to $C^{*}$-algebras, Lecture Notes in Mathematics, 793. Springer, Berlin, 1980.



\bibitem{renaultcartan}
J.~Renault. {\it Cartan subalgebras in {$C^*$}-algebras},  Irish Math. Soc. Bull.,  $\mathbf{61}$ 2008, 29--63.

\bibitem{re2000} J. Renault, {\it Cuntz-like algebras}, Operator theoretical methods. (Timioara, 1998), Theta Found (2000) 371--386.




\bibitem{rigby} S.W Rigby, {\it The groupoid approach to Leavitt path algebras}, arXiv:1811.02269. 





\bibitem{st} B. Steinberg, {\it A groupoid approach to discrete inverse semigroup
    algebras}, Adv. Math. $\mathbf{223}$ (2010), 689--727.

\bibitem{steinbergchain} B. Steinberg, {\it Chain conditions on  \'etale groupoid algebras with applications to Leavitt
path algebras and inverse semigroup algebras}, J. Aust. Math. Soc. $\mathbf{104}$ (2018), 403--411.

\bibitem{steinbergsimple} B. Steinberg, {\it Simplicity, primitivity and semiprimitivity of \'etale groupoid algebras with
applications to inverse semigroup algebras}, J. Pure Appl. Algebra  $\mathbf{220(3)}$ (2016), 1035--1054.


\bibitem{steinbergprime} B. Steinberg, {\it Prime \'etale groupoid algebras with applications to inverse semigroup and Leavitt path algebras}, arXiv:1802.01550. 

\bibitem{steinbergdia} B. Steinberg, {\it Diagonal-preserving isomorphisms of \'etale groupoid algebras}, arXiv:1711.01903v2 [math.RA] (2017).


\bibitem{we} S.B.G. Webster, {\it The path space of a directed graph}, Proc. Amer. Math.
    Soc. $\mathbf{142}$  (2014), 213--225.














\bibitem{weinstein} A. Weinstein, {\it Groupoids: unifying internal and external symmetry. A tour through some examples}, Notices Amer. Math. Soc.  $\mathbf{43}$ (1996), no. 7, 744--752.

\end{thebibliography}
\end{document}